\documentclass[preprint,1p, 11pt, times]{elsarticle}

\usepackage{amssymb}
\usepackage{amsmath}

\journal{Chaos, Solitons \& Fractals}

\usepackage{amsthm,booktabs}

\newtheorem{theorem}{Theorem}
\newtheorem{lemma}{Lemma}
\newtheorem{corollary}{Corollary}
\newtheorem{definition}{Definition}
\newtheorem{remark}{Remark}
\newtheorem{proposition}{Proposition}

\newcommand{\Mellin}{\mathcal{M}}
\newcommand{\Res}{\operatorname{Res}}
\newcommand{\Poles}{\operatorname{Poles}}

\usepackage{lineno}

\begin{document}

\begin{frontmatter}



\title{Mellin-Space Prony  Representability of Linear Viscoelastic Models}


\author{Dimiter Prodanov}

\affiliation{organization={Laboratory of Neurotechnology, PAML-LN, Institute of Information and Communication Technologies, Bulgarian Academy of Science},
            addressline={2, acad. G.Bontchev Str, bl. 25A}, 
            city={Sofia},
            postcode={1113}, 
            state={},
            country={Bulgaria}}


\begin{abstract}
Linear viscoelastic materials are commonly described by continuous relaxation spectra, 
yet practical measurements and simulations employ discrete Prony series. 
In the Laplace frequency domain, the distinction is well 
understood: rational transfer functions admit finite Prony representations, while 
fractional models with branch cuts do not. 
This work provides a complementary and structurally deeper characterization in the 
Mellin transform domain. We prove that a viscoelastic modulus admits an exact finite 
Prony series if and only if the arithmetic pole lattices of its Mellin kernel align 
with the integer lattice of the constitutive kernel, and the associated residues 
satisfy decoupled first-order recurrences along aligned sublattices. 
Unlike the Laplace-domain rational/non-rational dichotomy, the Mellin criterion reveals the 
arithmetic geometry underlying finite representability, which requires Diophantine alignment of infinite pole progressions and the compatibility of their residues.
Applying this criterion yields a complete model taxonomy. 
Classical spring-dashpot networks 
(Maxwell, standard linear solid) satisfy the alignment and recurrence conditions. 
In contrast, fractional models (power-law, Cole-Cole, Havriliak-Negami, Zener) and log-normal spectra 
violate one or both conditions and require infinite Prony ladders for exact representation. 
The framework thus shifts the question of finite network realizability from an algebraic condition on rational functions to a geometric condition on pole lattices, offering both a theoretical classification and a 
practical computational test.
\end{abstract}


\begin{keyword}
Mellin transform \sep linear viscoelasticity \sep Prony series \sep relaxation spectra 

\PACS 83.60.Bc \sep 02.30.Uu  \sep 46.35.+z \sep 02.30.Gp \sep 02.30.Hq

\MSC[2020] 44A20 \sep 33E12 \sep 33B15 \sep 30E20 \sep 74D05 
\end{keyword}

\end{frontmatter}



\section{Introduction}
A material is called linear viscoelastic if its stress is a linear time-dependent functional of material strain.  
Linear viscoelasticity is described by relaxation kernels, which are continuous  distributions of relaxation times. Yet experimentally, measurements are necessarily discrete and band-limited, creating a fundamental gap between the continuous mathematical description and empirical observations.
Prony series provide a widely used approximate representation of any viscoelastic model through finite sums of exponential terms in its time-dependent relaxation shear and compression moduli.
A comprehensive recent review of Prony-series applications in viscoelasticity is given in~\cite{Hristov2019}. 
With the caveat that ``all models are wrong but some are useful'' \cite{Box1976} --  a stance widely adopted in statistics and data science -- this work explores the utility of Prony series in describing linear viscoelasticity from a complex-analytic perspective.

From a dynamical-systems perspective, finding a linear viscoelastic representation is equivalent to constructing a mechanical network--a finite arrangement of springs and dashpots--whose input-output behavior reproduces the prescribed complex-valued dynamic modulus $G^*(\omega)$. Such networks generate relaxation spectra that are finite sums of Dirac masses (finite Prony series) with rational transfer functions in the frequency (Fourier/Laplace) variable. 
The inverse question -- whether a given modulus admits a finite network representation -- translates into a classical realization problem: does the system admit a finite-dimensional state-space description, or equivalently, does its transfer function have only finitely many simple poles on the negative real axis? The Mellin-transform framework developed here recasts this condition as an algebraic constraint on the pole lattices of the transformed kernel, thereby linking complex-plane singularity geometry directly to the existence of finite spring-dashpot networks.

Classical spring-dashpot networks -- Maxwell, Kelvin-Voigt, Standard Linear Solid, and their finite generalizations -- yield linear ordinary differential  equations for the constitutive relations. 
Furthermore, their complex moduli $G^*(\omega)$ are rational functions of the  spectral variable in the Fourier/Laplace domains, admitting exact  Prony representations with a finite number of exponential memory terms. 
That is, in the frequency domain their spectrum contains a finite number of poles.  

Fractional constitutive laws, by contrast, are both memory-dependent and non-local: stress at a given instant depends on the entire prior strain history, not merely on instantaneous rates. This is formalized through differintegral operators \cite{Oldham1974} -- generalized combinations of differentiation and integration to non-integer order -- which yield constitutive relations that cannot be reduced to ordinary differential equations \cite{Mainardi2010}.
Many soft materials exhibit power-law viscoelastic responses across typical observation bandwidths, modeled by continuous distributions of relaxation times that reflect their fractal internal structure  \cite{Bonfanti2020}.
Structurally, power-law models, Cole-Cole, and Havriliak-Negami kernels generate non-rational dynamic moduli $G^*(\omega)$ with branch-cuts, implying infinite relaxation times distributions.

Recent works survey Prony-series approximations of fractional-model data \cite{Hristov2019, Hristov2022}. However, these approximations remain \emph{ad hoc}: no systematic procedure exists to derive the Prony coefficients directly from the constitutive structure of a given model.
To understand the dichotomy between finite Prony models and non-local fractional models one needs to address the inverse problem:
given an analytical expression of $G^*(\omega)$, does it admit a faithful finite Prony representation? This question is numerically ill-posed: recovering $\{g_k, \tau_k\}$ from frequency data suffers Hadamard instability, where noise amplifies into spurious oscillations, obscuring the discrete-vs-continuous distinction \cite{Thigpen1983,Stankiewicz2023}.
Therefore, to establish the answer one needs to employ analytical tools. 

We address this dichotomy through Mellin-space analysis. 
In the Mellin complex plane fractional models have kernels (i.e. Mellin symbols) containing non-degenerate Gamma factors, whose poles form infinite arithmetic progressions. 
We prove that  exact finite Prony representability holds \emph{if and only if} these Gamma-factor pole lattices align exactly with the poles of the trial-state spectrum, with residues satisfying decoupled first-order recurrence relations along sublattices.
This maximal criterion classifies classical models (Maxwell, SLS) as finitely representable and fractional models (power-law, Cole-Cole, Gaussian) as transcendentally representable via infinite Prony ladders.

The paper is organized as follows.  
General notation and conventions are stated in Section~\ref{sec:notation-and-conventions}.  
Section~\ref{sec:mellin} introduces the Mellin-space formulation of linear viscoelasticity.  
Section~\ref{sec:criteria} states and proves the main criterion-a necessary and sufficient condition for finite representability within the Extended Fox class \(\mathcal{Q}\).  
Section~\ref{sec:classif} classifies common viscoelastic models according to this criterion.  
Section~\ref{sec:test} provides a practical computational test for finite representability.  
Technical details are relegated to the Appendix. 

\section{Notation and Conventions}\label{sec:notation-and-conventions}

\subsection{Mellin transforms}

For a function \( f(t) \) integrable on the positive half-line \((0,\infty)\) whose growth near 0 and about infinity is at most power-law, the Mellin transform is defined by the complex-valued integral
\[
\Mellin\{f\}(s) \equiv \tilde f(s) := \int_0^\infty t^{\,s-1} f(t)\,dt,
\]
whenever the integral converges absolutely in some vertical strip \( a < \Re s < b \) of the complex plane \cite{Oberhettinger1974}. For many functions arising in viscoelasticity (e.g., exponentially decaying relaxation spectra), such a strip exists.

\textbf{Extension to generalized functions.}  
The definition extends to distributions via analytic continuation, and to tempered distributions via the theory of generalized functions.
In particular:
\begin{itemize}
	\item For a \emph{discrete spectrum} coming from finite Prony series \( H(\tau) = \sum_{k=1}^{N} g_k \tau_k \delta(\tau - \tau_k) \), the Mellin transform is evaluated as the finite sum
	\[
	\tilde H(s) = \sum_{k=1}^{N} g_k \tau_k^{\,s},
	\]
	which is an entire function of exponential type.
	
	\item For a \emph{pure power-law} \( f(\tau) = \tau^{\beta} \), the Mellin transform 
	is defined via analytic continuation from the strip $(0, 1)$ to the strip $[1, \infty) $ of the range-limited integral 
	\[
	\Mellin \{\tau^\beta\} (s)= \int_{0}^{1} \tau^{\beta} \tau^{s-1} d \tau =\frac{1}{\beta +s}
	\]
	and corresponds to a simple pole at \(s = -\beta\).
	
\end{itemize}

All Mellin transforms appearing in this work are either classical (convergent in a strip) or defined via such analytic/distributional extensions, ensuring that the subsequent constitutive identity and pole-lattice analysis are mathematically well-defined.

\subsection{The Discrete Prony Class \(\mathcal{P}\) and the H-Function Hierarchy}\label{sec:prony}

The canonical Prony model \cite{Prony1795},
\[
G_P(t)=G_\infty + \sum_{k=1}^N g_k e^{-t/\tau_k},
\]
represents the stress relaxation as a finite superposition of decreasing exponentials, reflecting the fading-memory principle of Boltzmann's superposition~\cite{Baumgaertel1992}. 
Furthermore, Bernstein's theorem~\cite{Bernstein1929} guarantees that completely monotone relaxation moduli $G(t)$ admit continuous spectral representations as non-negative Laplace transforms:
\begin{equation}\label{eq:Gtime}
	G(t) = G_\infty + \int_0^\infty e^{-t/\tau} H(\tau) \frac{d\tau}{\tau},\quad H(\tau)\geq 0. 
\end{equation}
This establishes realizability for fading-memory materials but leaves unresolved the structural dichotomy between \emph{finite} Prony series (discrete spectra) and generic continuous distributions.

To address this dichotomy analytically, we introduce a hierarchy of three function classes based on the structure of the Mellin transform $\widetilde{H}(s) = \Mellin\{H(\tau)\}(s)$.

\begin{definition}[Discrete Prony class \(\mathcal{P}\)]\label{def:prony}
	A function \(\tilde H(s)\) belongs to the \emph{Discrete Prony class} \(\mathcal{P}\) if it is a finite exponential sum:
	\[
	\tilde H(s) = \sum_{k=1}^N g_k e^{\lambda_k s}, \qquad g_k > 0,\; \lambda_k \in \mathbb{R}.
	\]
	Equivalently, its inverse Mellin transform is a finite sum of Dirac masses $$H(\tau) = \sum_{k=1}^N g_k \tau_k \delta(\tau-\tau_k)$$ with $\tau_k = e^{\lambda_k}$.
\end{definition}

Many viscoelastic models, particularly those arising from fractional constitutive laws, possess \emph{continuous} relaxation spectra whose Mellin transforms involve ratios of Gamma functions. These are precisely the Mellin transforms of Fox's H-functions \cite{MathaiSaxenaHaubold2010, Fox1961}.

\begin{definition}[H-function class \(\mathcal{H}\)]\label{def:Hclass}
	A function \(\tilde H(s)\) belongs to the \emph{H-function class} \(\mathcal{H}\) if it can be written as a finite ratio of Gamma functions:
	\[
	\tilde H(s) = \frac{\prod_{i=1}^{N}\Gamma(a_i s+b_i)}{\prod_{j=1}^{M}\Gamma(c_j s+d_j)},
	\qquad a_i,c_j>0;\quad N,M<\infty.
	\]
\end{definition}

For the most general analysis within our framework, we allow multiplication by the exponential of an entire function. This extension is \emph{narrower} than the \(\overline{H}\)-function of Inayat-Hussain \cite{InayatHussain1987}, which admits \(s^s\) factors and encompasses polylogarithms and the Lambert \(W\) function. Our restricted class excludes such transcendental elements and is characterized solely by the factor \(Q(s) = \exp(P(s))\) where \(P(s)\) is entire.

\begin{definition}[Extended Fox class \(\mathcal{Q}\)]\label{def:Qclass}
	A function \(\tilde H(s)\) belongs to the \emph{Extended Fox class} \(\mathcal{Q}\) if it can be written as
	\[
	\tilde H(s) = H_\Gamma(s) \cdot Q(s),
	\]
	where \(H_\Gamma(s) \in \mathcal{H}\) is a finite Gamma-ratio and 
	\(Q(s) = \exp(P(s))\) with \(P(s)\) an entire function.
\end{definition}

The three classes form a strict hierarchy:
\[
\mathcal{P} \subset \mathcal{Q}, \qquad \mathcal{H} \subset \mathcal{Q}.
\]
Functions in \(\mathcal{P}\) are entire (pole-free), whereas non-degenerate members of \(\mathcal{H}\) possess infinite lattices of poles. The class \(\mathcal{Q}\) is the maximal class for which the pole-lattice analysis of this paper applies.

Some remarks are also in order about the \textbf{pole set} of \(\tilde H (s)\), designated as \(\Poles (\tilde H)\).
For any \(\widetilde{H} \in \mathcal{Q}\), this set is entirely determined by the H-function kernel \(H_\Gamma(s)\): each numerator Gamma factor \(\Gamma(a_i s+b_i)\) generates an infinite arithmetic progression $\Lambda_i$ with members
\[
\Lambda_i=\Bigl\{-\frac{b_i+k}{a_i}\;:\;k=0,1,2,\dots\Bigr\},\qquad \Delta_i=\frac{1}{a_i},
\]
which we call informally a \emph{pole lattice}. 
After cancellations between numerator and denominator lattices, the resulting set \(\Poles (\tilde H)\) remains a finite union of such lattices. 
Furthermore, since \(\Poles (\tilde H)\) is countable and totally disconnected, it is also nowhere dense on the real line.

\section{Mellin-Space Formulation of Linear Viscoelasticity}\label{sec:mellin}

We adopt the following notation conventions: $t$ denotes physical time, $\tau>0$ denotes relaxation timescale, $\omega>0$ is angular frequency, and $s$ is the complex Mellin variable, coupling the relaxation timescale to frequency.
The Mellin variable $s$ reveals the hidden logarithmic structure of relaxation spectra, transforming scale-separated Prony series into algebraic pole-lattice geometry.

\subsection{Constitutive relation}

The non-aging, isothermal stress-strain relation for a linear viscoelastic material is given by the Boltzmann superposition integral:
\begin{equation}
	\sigma(t) = \int_{-\infty}^t G(t-u) \, d\varepsilon(u),
\end{equation}
where $\sigma(t)$ and $\varepsilon(t)$ are the stress and strain, and $G(t)$ is the relaxation modulus.

In the time domain, the relaxation modulus of a linear viscoelastic solid takes the spectral form \eqref{eq:Gtime} as discussed. 
In the frequency domain, the relation becomes
\begin{equation}
	\sigma^*(\omega) = G^*(\omega) \varepsilon^*(\omega),
\end{equation}
where the complex modulus admits the spectral representation
\begin{equation}
	G^*(\omega) = G_\infty + \int_0^\infty \frac{i\omega\tau}{1 + i\omega\tau} H(\tau) \frac{d\tau}{\tau}.
	\label{eq:constitutive}
\end{equation}
The spectral kernel is thus identified as
\begin{equation}
	K(\omega\tau) := \frac{i\omega\tau}{1 + i\omega\tau}.
\end{equation}

As a technical condition we will require that the integral converges absolutely for every \(\omega>0\) and that both \(G^*\) and \(H\) possess Mellin transforms in overlapping vertical strips of the complex plane.

Observe that a \emph{finite Prony series} corresponds to a discrete spectrum
\begin{equation}
	H(\tau)=\sum_{k=1}^{N}g_k\tau_k\delta(\tau-\tau_k), 
\end{equation}
producing 
\begin{equation}
	G(t)=G_\infty+\sum_{k=1}^{N}g_k e^{-t/\tau_k}
\end{equation}
in the time domain.

\subsection{Mellin transform of the spectral kernel}

The viscoelastic spectral  kernel $K(\omega \tau)$ has Mellin transform (w.r.t. $\omega$, fixed $\tau>0$)
\begin{equation}\label{eq:kernel_mellin_section}
	\tilde K(s) = \Mellin_\omega\{K(\omega \tau)\}(s) = \pi e^{i\pi(1-s)/2} \csc(\pi s),
	\qquad -1<\Re s<0,
\end{equation}
as shown in Appendix Lemma~\ref{lem:beta}.

\begin{remark}
	The factor $e^{i\pi(1-s)/2}$ is entire (no poles). All singularities come from 
	$\csc(\pi s)$, which has simple poles at $s=n\in\mathbb{Z}$ with residues 
	$(-1)^n$. Thus $\tilde K(s)$ has simple poles at all integers $s=n$ with residues
	$-e^{i\pi(1-n)/2}(-1)^n$.
\end{remark}

\subsection{Constitutive equation in Mellin space}
To avoid dealing explicitly with the Dirac $\delta$ distribution let $\Delta G^* (\omega)= G^*(\omega)-G_\infty $, be the viscoelastic part of the complex modulus, and
\begin{equation}
	\tilde G(s) := \Mellin\{ \Delta G^*(\omega) \}(s)
\end{equation}
be its Mellin transform, and
$
\tilde H(s) = \Mellin\{H(\tau)\}(s) 
$
be the Mellin transform of the relaxation spectrum.
Applying the Mellin transform to \eqref{eq:constitutive} and using the convolution theorem for Mellin transforms yields 
\begin{equation}\label{eq:mellin_constitutive}
	\tilde G(s) = \tilde K(s) \,\tilde H(-s),
	\qquad 0 < \Re s < 1.
\end{equation}
Equation \eqref{eq:mellin_constitutive} is the fundamental Mellin-space constitutive relation of linear viscoelasticity. 
It links the unknown spectrum \(\tilde H(s)\) to the prescribed modulus \(\tilde G(s)\) through a simple multiplicative kernel that is meromorphic in \(s\).
Note that since $\tilde{K}(s)$ is odd, its argument sign convention corresponds to $\tilde{H}(-s)$.
This sign convention preserves the physical duality between the timescale variable $\tau$ and the frequency $\omega$ in the Mellin representation.

Note also the argument inversion in the Mellin representation \(\tilde H(-s)\), which determines that  $\tilde H$ has poles in the set
$
\Lambda_i^- = \{\lambda : -\lambda \in \Lambda_i\} 
$
i.e. the lattices are reflected through the origin.

\section{Criteria for Finite Prony Representation}\label{sec:criteria}

First, we consider the particular case of a degenerate $H(\tau)$ kernel.
\subsection{Degenerate case}
\begin{proposition}[Characterization of finite Prony spectra]\label{prop:finitePronyChar}
	Let \(H(\tau)\) be a positive measure on \((0,\infty)\) with Mellin transform \(\tilde{H}(s)\) converging in some vertical strip. Then \(H(\tau)\) is a finite Prony series, i.e.,
	\[
	H(\tau)=\sum_{k=1}^{N} g_k\tau_k\delta(\tau-\tau_k), \quad g_k>0,\ \tau_k>0,
	\]
	if and only if \(\tilde{H}(s)\) is a finite exponential sum of the form
	\[
	\tilde{H}(s)=\sum_{k=1}^{N} g_k e^{\lambda_k s},
	\]
	where  \(\lambda_k = \ln\tau_k \in \mathbb{R}\) .
\end{proposition}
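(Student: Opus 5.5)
The forward direction is a direct computation. If $H(\tau)=\sum_{k=1}^N g_k\tau_k\delta(\tau-\tau_k)$, then integrating against $\tau^{s-1}$ gives
\[
\tilde H(s)=\sum_k g_k\tau_k\,\tau_k^{s-1}=\sum_k g_k\tau_k^{s}=\sum_k g_k e^{s\ln\tau_k}.
\]
This is valid for every $s\in\mathbb{C}$, since the measure has compact support in $(0,\infty)$. Setting $\lambda_k=\ln\tau_k$ gives the stated form, and $g_k>0$ is inherited.

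For the converse I will reduce to uniqueness of the Mellin transform. The strategy is to pass to the logarithmic variable. Set $x=\ln\tau$ and define the pushforward measure $\mu$ on $\mathbb{R}$ by $d\mu(x)=\tau^{-1}H(\tau)\,d\tau$, i.e.\ $\int \varphi\,d\mu=\int_0^\infty \varphi(\ln\tau)\,H(\tau)\,d\tau/\tau$. Then
\[
\tilde H(s)=\int_{\mathbb{R}} e^{sx}\,d\mu(x)
\]
is a two-sided Laplace transform of a positive measure, convergent in the given strip $a<\Re s<b$. The candidate measure $\nu=\sum_k g_k\delta_{\lambda_k}$ has two-sided Laplace transform $\sum_k g_k e^{\lambda_k s}$, which is entire. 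The two transforms agree on the strip.

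Fix $c\in(a,b)$ and restrict to the line $s=c+i\xi$. On this line $\int e^{i\xi x}e^{cx}\,d\mu(x)=\int e^{i\xi x}e^{cx}\,d\nu(x)$ for all real $\xi$. Both $e^{cx}d\mu$ and $e^{cx}d\nu$ are finite positive measures, the first because the Mellin integral converges absolutely at $\Re s=c$. Their Fourier--Stieltjes transforms coincide, so by the uniqueness theorem for Fourier transforms of finite measures (Lévy), $e^{cx}\mu=e^{cx}\nu$. Hence $\mu=\nu$. Transforming back via $\tau=e^x$ gives $H(\tau)\,d\tau/\tau=\sum_k g_k\delta_{\tau_k}$, which is precisely $H(\tau)=\sum_k g_k\tau_k\delta(\tau-\tau_k)$ with $\tau_k=e^{\lambda_k}$.

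The main obstacle is this uniqueness step, specifically its hypotheses. The representation $\sum g_k e^{\lambda_k s}$ must hold on the strip of convergence of $\tilde H$ itself, not only after analytic continuation, so that the restriction to a vertical line is legitimate. Both sides also have to be interpreted as measures, not formal distributions, so that Lévy's theorem applies. Positivity of $H$ makes absolute convergence on the line automatic, which is why I would fix a single interior abscissa $c$ and work there. Two minor remarks close the argument. If some $\lambda_k$ coincide, I merge the corresponding terms, which keeps $g_k>0$. Distinct exponentials $e^{\lambda_k s}$ are linearly independent, so the representation, and hence $N$, is unique.
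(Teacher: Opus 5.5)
Your argument is correct, but for the converse you take a different route from the paper. The paper inverts term by term. It writes $H(\tau)=\frac{1}{2\pi i}\int_{\sigma-i\infty}^{\sigma+i\infty}\tau^{-s}\tilde H(s)\,ds$ and identifies the inverse Mellin transform of each $e^{\lambda_k s}$ as $\delta(\ln\tau-\lambda_k)$. It then converts this into $\tau_k\,\delta(\tau-\tau_k)$ using the composition rule $\delta(g(\tau))=\sum_i\delta(\tau-\tau_i)/|g'(\tau_i)|$. That computation is quick and shows where the weight $\tau_k$ comes from, but it is explicitly formal. The line integral of $\tau^{-s}e^{\lambda_k s}$ does not converge classically. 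Moreover, the claim that this inversion returns the given measure $H$ tacitly assumes a uniqueness theorem for Mellin transforms of measures. You supply exactly that missing ingredient. In the variable $x=\ln\tau$ you read $\tilde H(c+i\xi)$ as the Fourier--Stieltjes transform of the finite positive measure $e^{cx}\,d\mu$ and apply uniqueness for finite measures. No divergent integral or delta-composition rule is needed, and the factor $\tau_k$ falls out of $d\mu=H\,d\tau/\tau$. Your approach therefore gives rigor under precisely the stated hypotheses, since positivity yields absolute convergence on the line. It also does not need $g_k>0$ as an input: uniqueness holds for finite complex measures, so positivity of the merged weights is forced by $\mu=\nu$. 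The concern you flag about analytic continuation is harmless. Both sides are holomorphic on the strip, so by the identity theorem, agreement on any open subset already gives agreement on the whole strip.
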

\begin{proof}
	($\Rightarrow$) Direct computation gives \(\tilde{H}(s)=\sum_{k} g_k \tau_k^{s} = \sum_{k} g_k e^{s\ln\tau_k}\).
	
	($\Leftarrow$) Suppose \(\tilde{H}(s)=\sum_{k=1}^{N} g_k e^{\lambda_k s}\) with distinct \(\lambda_k\in\mathbb{R}\) and \(g_k >0 \). The inverse Mellin transform along a vertical line \(\Re s = \sigma\) in the strip of convergence yields
	\[
	H(\tau)=\frac{1}{2\pi i}\int_{\sigma-i\infty}^{\sigma+i\infty} \tau^{-s} \tilde{H}(s)\,ds
	= \sum_{k=1}^{N} g_k \cdot \frac{1}{2\pi i}\int_{\sigma-i\infty}^{\sigma+i\infty} \tau^{-s} e^{\lambda_k s}\,ds.
	\]
	The inner integral is the inverse Mellin transform of \(e^{\lambda_k s}\). Since \(e^{\lambda_k s} = \mathcal{M}\{\delta(\ln\tau-\lambda_k)\}(s)\) in the distributional sense, we obtain formally
	\[
	\frac{1}{2\pi i}\int_{\sigma-i\infty}^{\sigma+i\infty} \tau^{-s} e^{\lambda_k s}\,ds = \delta(\ln\tau - \lambda_k) = \tau\,\delta(\tau-\tau_k),
	\]
	using the scaling property of the delta function (valid in distributional sense) 
	\[
	\delta(g(\tau)) = \sum_{\tau_i: \, g(\tau_i)=0} \frac{\delta (\tau-\tau_i)}{|g'( \tau_i) |}
	\]
	where \(\tau_k = e^{\lambda_k}\). Hence \(H(\tau)=\sum_{k} g_k\tau_k\delta(\tau-\tau_k)\), which is a finite Prony series.
\end{proof}

\begin{proposition}[Finite Prony solutions in \(\mathcal{Q}\)]\label{cor:finiteProny}
	Assume the following \emph{genericity conditions}:
	\begin{enumerate}
		\item All poles of \(\widetilde{G}(s)\) are simple.
		\item The only pole coincidences are those forced by the lattice-alignment condition.
	\end{enumerate}
	Then a solution \(\widetilde{H}(s) \in \mathcal{Q}\) 
	corresponds to a finite Prony series if and only if:
	\begin{enumerate}
		\item The Gamma-ratio \(H_\Gamma(s)\) is a constant, and
		\item The entire factor \(Q(s)\) is a finite exponential sum \(\sum_{k=1}^N g_k e^{\lambda_k s}\) with \(\lambda_k \in \mathbb{R}\).
	\end{enumerate}
\end{proposition}

\begin{proof}
	By Proposition~\ref{prop:finitePronyChar}, a relaxation spectrum \(H(\tau)\) is a finite 
	Prony series if and only if its Mellin transform \(\widetilde{H}(s)\) is a finite exponential sum.
	
	\paragraph{Sufficiency.} If \(H_\Gamma(s)\) is constant (without loss of generality, 
	\(H_\Gamma(s) \equiv 1\)) and \(Q(s) = \sum_{k=1}^N g_k e^{\lambda_k s}\), then 
	\(\widetilde{H}(s) = Q(s)\) is a finite exponential sum. By Proposition~\ref{prop:finitePronyChar}, 
	the spectrum is a finite Prony series.
	
	\paragraph{Necessity.} Suppose \(\widetilde{H}(s)\) corresponds to a finite Prony series. 
	Then \(\widetilde{H}(s)\) is a finite exponential sum, hence an entire function with no poles. 
	Since \(\widetilde{H}(s) = H_\Gamma(s) Q(s)\) and \(Q(s) = \exp(P(s))\) never vanishes, 
	every pole of \(H_\Gamma(s)\) would be a pole of \(\widetilde{H}(s)\). Because \(\widetilde{H}(s)\) 
	has no poles, \(H_\Gamma(s)\) must have no poles. A Gamma-ratio without poles is necessarily 
	constant. With \(H_\Gamma(s) \equiv 1\), we have \(\widetilde{H}(s) = Q(s)\), so \(Q(s)\) 
	must be a finite exponential sum.
\end{proof}

\subsection{Generic case}

To proceed with the analysis we construct a trial-state ansatz defined as follows:
\begin{definition}[Trial  state]\label{def:trialstate}
	A trial  state  $G^*  $ is a modulus which admits a Mellin symbol with factorization:
	\begin{equation}\label{eq:trialstate}
		\Mellin\{G^*(\omega)\}(s)= \tilde G(s):=A(s)\,\Gamma(\alpha s+\beta),\qquad \alpha>0,
	\end{equation}
	where \(A(s)\) is either entire or meromorphic with a pole lattice disjoint from the lattice of $\Gamma(\alpha s+\beta)$.	
\end{definition}
Observe that a trial-state is finite Prony by construction and the Gamma factor of the trial-state produces a reference pole lattice
\[
\Lambda_G=\Bigl\{-\frac{\beta+m}{\alpha}\;:\;m=0,1,2,\dots\Bigr\},\qquad \Delta_G=\frac{1}{|\alpha|}.
\]

The argument of the main Theorem can be illustrated in Fig. \ref{fig:ladder_thm}.
\begin{figure}[h!]
	\centering
	\includegraphics[width=1.0\linewidth]{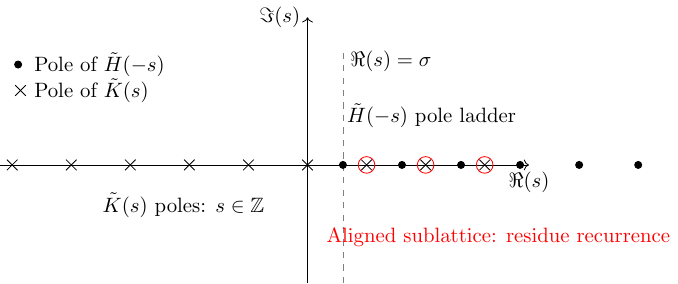}
	\caption{Schematic pole-lattice (ladder) structure in the Mellin constitutive identity
		\(\tilde G(s)=\tilde K(s)\,\tilde H(-s)\).
		Crosses mark the simple poles of the kernel factor \( \tilde K(s)\), located at the integer lattice \(s\in\mathbb Z\).
		Filled dots mark a representative arithmetic pole ladder of \(\tilde H(-s)\) lying on the real axis.
		Red circles highlight an \emph{aligned sublattice} where poles of \(\tilde H(-s)\) coincide with kernel poles; along such an aligned ladder the residue matching yields the first-order recurrence of Theorem~\ref{th:main}.}
	\label{fig:ladder_thm}
\end{figure}

\paragraph{Finite Prony exactness as a relaxation-time question}
The trial-state ansatz \eqref{eq:trialstate} arises as the Mellin transform (w.r.t. the physical time $t$) of the subordination over relaxation timescales
\[
G(t) = \int_0^\infty a(t/\tau)\,\tau^{\beta/\alpha} e^{ -\tau^{1/\alpha}} \frac{d\tau}{\tau},
\]
between a finite Prony series  $a(t/\tau)$   and the trial-state gamma kernel $\tau^{\beta/\alpha} e^{ -\tau^{1/\alpha}}$ whose Mellin transform is 
\[
\mathcal{M}\{\tau^{\beta/\alpha} e^{ -\tau^{1/\alpha}} \}(s) = {\alpha}\Gamma(\alpha s + \beta).
\]
Thus the trial-state factorization $\tilde{G}(s) = A(s)\Gamma(\alpha s + \beta)$ emerges from a multiplicative convolution in the \emph{relaxation-time scale space} $\tau$. Finite Prony exactness then reduces to the existence of (i) a finite Prony sum $a(t/\tau)$ whose Mellin transform recovers the prescribed modulation $A(s)$ and (ii) the existence of a power-law whose subordination yields $G(t)$  on the analyticity strip. 
This combination amounts to the fulfillment of the two technical conditions identified in Th. \ref{th:main}.
Such an interpretation provides an analytic basis for finite-mode relaxation dynamics as emerging from a continuous power-law spectrum through subordination in relaxation-timescale space.
\emph{Mutatis mutandis}, if $a(t/\tau)$ is instead allowed to be an infinite Prony series
\[
a(u) = \sum_{k=1}^\infty g_k e^{-u/\tau_k},
\]
the resulting representation becomes the infinite Prony ladder constructed in Th.~\ref{th:infinite-ladders}, which we also call  a transcendental Prony representation.

\begin{theorem}[Criterion for Representability in \(\mathcal{Q}\)]\label{th:main}
	Let \(G^*(\omega)\) be a complex viscoelastic modulus whose Mellin transform admits a trial-state factorization
	\(\widetilde{G}(s) = A(s)\,\Gamma(\alpha s+\beta)\) as in Definition~\ref{def:trialstate}. 
	Denote the pole lattice of the Gamma factor by
	\[
	\Lambda_G := \left\{ -\frac{\beta+m}{\alpha} : m = 0,1,2,\dots \right\}.
	\]
	
	Assume the following \emph{genericity conditions}:
	\begin{enumerate}
		\item All poles of \(\widetilde{G}(s)\) are simple.
		\item The only pole coincidences are those forced by the lattice-alignment condition.
	\end{enumerate}
	
	Then, for the constitutive equation
	\[
	\tilde{G}(s) = \tilde K(s)\,\tilde{H}(-s), \qquad 
	\tilde K(s) = \pi e^{i\pi(1-s)/2}\csc(\pi s),
	\]
	a solution \(\widetilde{H}(s) \in \mathcal{Q}\) exists \textbf{if and only if} there exists a candidate pole lattice \(\Lambda_H\) satisfying:
	
	\begin{enumerate}
		\item \textbf{Geometric Criterion (Lattice Alignment):} 
		\(\Lambda_G \subseteq \mathbb{Z} \cup (-\Lambda_H)\) and \(\Lambda_H \subseteq \mathbb{Z} \cup (-\Lambda_G)\).
		
		\item \textbf{Algebraic Criterion (Residue Compatibility):} 
		The residues along aligned sublattices satisfy decoupled first-order recurrences 
		that are mutually consistent across pole families.
	\end{enumerate}
	
	\noindent \textit{Sufficiency.} When both criteria hold, a solution \(\widetilde{H}(s) \in \mathcal{Q}\) exists.
	
	\noindent \textit{Uniqueness.} If \(Q(s)\) is of exponential type with
	\[
	\limsup_{y\to\pm\infty} \frac{|\log|Q(iy)||}{|y|} < \pi,
	\]
	the solution is unique up to scale, fixed by \(\int_0^\infty H(\tau) d\tau/\tau = G_0-G_\infty\).
	
	\noindent \textit{Necessity.} If either criterion fails, no non-trivial solution exists in \(\mathcal{Q}\).
\end{theorem}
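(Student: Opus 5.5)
The plan is to read the constitutive identity \eqref{eq:mellin_constitutive} as a division problem: any solution must be
\[
\widetilde H(-s)=\frac{\widetilde G(s)}{\widetilde K(s)}=\frac{A(s)\,\Gamma(\alpha s+\beta)\,\sin(\pi s)}{\pi\,e^{i\pi(1-s)/2}}.
\]
Since $e^{i\pi(1-s)/2}$ is entire and zero-free, it can be absorbed into the factor $Q(s)=\exp(P(s))$. The whole question therefore reduces to whether the meromorphic function $\Gamma(\alpha s+\beta)\sin(\pi s)A(s)$ can be written as (Gamma-ratio)$\times\exp(\text{entire})$. To organise this, I would use the reflection formula $\sin(\pi s)=\pi/(\Gamma(s)\Gamma(1-s))$. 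Then the candidate $H_\Gamma(-s)$ is $\Gamma(\alpha s+\beta)/(\Gamma(s)\Gamma(1-s))$ times the Gamma part of $A$, and the denominator factors encode exactly the zeros of $\sin(\pi s)$ at $\mathbb Z$. With this form the pole set of $\widetilde H(-s)$ is read off lattice by lattice. It consists of the points of $\Lambda_G$ not cancelled by integer zeros of $\sin\pi s$, together with poles inherited from $A$.

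\textbf{Necessity: geometric criterion.} Compare the pole sets on both sides of $\widetilde G(s)=\widetilde K(s)\widetilde H(-s)$, using genericity condition 1 (all poles simple) and condition 2 (no accidental coincidences). A point of $\Lambda_G$ is a pole of the left side, so it must be a pole of $\widetilde K$ (an integer) or a pole of $\widetilde H(-s)$, i.e.\ lie in $-\Lambda_H$. This gives $\Lambda_G\subseteq\mathbb Z\cup(-\Lambda_H)$. Conversely, a pole of $\widetilde H(-s)$ that is not cancelled by a zero of $\widetilde K$ survives in $\widetilde G$. Since $\widetilde K$ has no zeros, every such pole must lie in $\Lambda_G$ or in the pole lattice of $A$. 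The latter is disjoint from $\Lambda_G$ and is handled symmetrically, which gives $\Lambda_H\subseteq\mathbb Z\cup(-\Lambda_G)$. Simplicity matters here: at an integer $n\in\Lambda_G$ one cannot have a pole of $\widetilde H(-s)$ at $n$ as well, since that would produce a double pole on the left.

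\textbf{Necessity: algebraic criterion.} Take the residues of both sides at aligned points $s_m=-(\beta+m)/\alpha\in\mathbb Z$. The left side gives $A(s_m)(-1)^m/(\alpha\, m!)$. The right side gives $\Res_{s_m}\widetilde K\cdot\widetilde H(-s_m)$. Dividing consecutive terms along an aligned arithmetic sublattice $m\mapsto m+q$ (with $q$ minimal so that $q/\alpha\in\mathbb Z$) yields a first-order recurrence $\widetilde H(-s_{m+q})=R_m\,\widetilde H(-s_m)$, where $R_m$ is given by Gamma quotients times $A(s_{m+q})/A(s_m)$. If $\widetilde H\in\mathcal Q$, then $\widetilde H$ is itself a Gamma ratio times $e^{P}$. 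This forces $R_m$ to be a rational function of $m$ times a ratio of values of $e^{P}$. When two or more families share points, these recurrences must agree, and that is precisely the consistency requirement in the statement. If it fails, no $\mathcal Q$-function can interpolate the residues.

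\textbf{Sufficiency.} Reverse the argument. Given $\Lambda_H$ satisfying both criteria, take $H_\Gamma(-s)$ to be the product of Gamma factors generating the unaligned parts of $\Lambda_G$, divided by Gamma factors cancelling the aligned integers. Then choose $P$ so that $e^{P}$ reproduces the residue data prescribed by the recurrence. The recurrence being first order and compatible is exactly what lets one entire $P$ do this, via Weierstrass/Mittag-Leffler interpolation on a discrete set. One then checks that the quotient $\widetilde G/(\widetilde K\,\widetilde H(-\cdot))$ has no poles or zeros, so it is an entire zero-free function absorbed into $Q$.

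\textbf{Uniqueness.} If $\widetilde H_1,\widetilde H_2$ are two solutions, their ratio $e^{P_1-P_2}$ is entire and zero-free. It must also be $1$-periodic, because the freedom comes only from multiplying by functions that leave $\widetilde K\widetilde H$ unchanged on the lattices. Under the type bound $<\pi$ on the imaginary axis, a Phragm\'en--Lindel\"of / Carlson argument forces this ratio to be constant. The normalisation $\widetilde H(0)=G_0-G_\infty$ then fixes the constant.

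I expect the main obstacle to be the necessity of the residue condition. One has to show rigorously that an arbitrary entire $P$ cannot compensate for an inconsistent recurrence; in other words, interpolation freedom in $e^{P}$ must be limited by the Gamma structure and the exponential-type growth. I would first try to control it by Carlson's theorem applied along each aligned sublattice.
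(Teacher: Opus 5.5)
Your skeleton (pole comparison, residue recurrences, interpolation for $Q$, Carlson for uniqueness) parallels the paper's Steps 4--6. The new ingredient is that you take the division $\tilde H(-s)=\tilde G(s)/\tilde K(s)$ seriously as a formula; the paper uses it only to compare pole sets. Since $\tilde K$ is zero-free, $\tilde H(-s)=\tilde G(s)\sin(\pi s)\,e^{-i\pi(1-s)/2}/\pi$ is the one and only meromorphic solution.

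Followed through, this exposes a genuine gap in your sufficiency step. Everything rests on ``one then checks that the quotient $\tilde G/(\tilde K\,\tilde H(-\cdot))$ has no poles or zeros,'' and that does not follow from the two criteria. After your Gamma bookkeeping, the quotient is, up to an exponential factor, $A$ divided by the Gamma part you extracted from it. Neither alignment nor residue compatibility says anything about zeros of $A$ off the lattices. The interpolation of $e^{P}$ does not help: no freedom is left, and agreement on a discrete set would not give $\tilde K\tilde H(-\cdot)=\tilde G$ anyway.

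For a counterexample, take $\tilde G(s)=(g_1\tau_1^{-s}+g_2\tau_2^{-s})\Gamma(s)$ with $g_k>0$ and $\tau_1\neq\tau_2$. This is a trial state with $\alpha=1$, $\beta=0$ and entire $A$. Its poles $0,-1,-2,\dots$ are simple integers, so both criteria hold with $\Lambda_H=\varnothing$ (the residue condition is vacuous). Yet the unique solution is $\tilde H(-s)=(g_1\tau_1^{-s}+g_2\tau_2^{-s})\,e^{-i\pi(1-s)/2}/\Gamma(1-s)$. It vanishes at every solution of $(\tau_1/\tau_2)^{s}=-g_1/g_2$, and these lie on one vertical line with infinitely many distinct imaginary parts. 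Every element of $\mathcal{Q}$ has its zeros on finitely many horizontal lines, since they come only from factors $1/\Gamma(c_js+d_j)$. Hence $\tilde H\notin\mathcal{Q}$. The same obstruction hits a two-mode Maxwell model, whose unique solution is the finite Prony transform $g_1\tau_1^{s}+g_2\tau_2^{s}$.

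What your route actually proves, allowing Gamma factors like $\Gamma(1-s)$ as you do, is this: a solution in $\mathcal{Q}$ exists iff $A$ itself is a Gamma ratio times the exponential of an entire function. The paper's Step 6 hides the same issue behind ``uniqueness of meromorphic continuation,'' but matching principal parts only makes the difference entire, not zero.

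Several other steps need repair.
\begin{itemize}
\item Uniqueness is immediate from the division, with no type bound and no free scale. The ``1-periodic'' claim and the Carlson argument are unjustified and unnecessary, and the sum rule is a consequence, not a normalisation.
\item For the same reason, necessity of residue compatibility, which you flag as the main obstacle, is automatic: every residue relation is read off the single function $\tilde G/\tilde K$. The real difficulty lies entirely in sufficiency.
\item ``Handled symmetrically'' does not yield $\Lambda_H\subseteq\mathbb Z\cup(-\Lambda_G)$. By your own description of the poles of $\tilde H(-s)$, every non-integer pole of $A$ is a pole of $\tilde H(-s)$ lying outside $\mathbb Z\cup\Lambda_G$. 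An example is the poles of the factor $\Gamma(\beta-s)$ that the paper's Cole--Davidson computation puts into $A$. So the inclusion holds only for the part of $\Lambda_H$ coming from $\Lambda_G$; the paper's Step 4 glosses over the same point.
\item $\sin(\pi s)=\pi/(\Gamma(s)\Gamma(1-s))$ introduces a Gamma factor with negative coefficient, which Definition~\ref{def:Hclass} ($a_i,c_j>0$) formally excludes. Read literally, that restriction puts $\sin(\pi s)$ outside $\mathcal{Q}$, since its zeros run off in both directions. Your reduction then becomes a genuine constraint on $\alpha,\beta$ rather than an identity.
\end{itemize}
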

\begin{proof}
	The proof proceeds in five steps, carefully accounting for the pole structure of both sides of the constitutive equation.
	
	\paragraph{Step 1: Notation and preliminary setup}
	Observe that $\tilde K(s)$ is meromorphic with:
	\begin{itemize}
		\item Simple poles at all integers \(s=n\in\mathbb{Z}\) with residues
		\[
		\Res_{s=n} \tilde K(s) =  e^{-i\pi n/2} \cdot (-1)^n =  (-1)^n e^{-i\pi n/2}.
		\]
		\item An entire factor \(e^{-i\pi s/2}\) that does not affect pole locations.
	\end{itemize}
	Let \(\tilde H(s)=H(s)Q(s)\) as in Definition~\ref{def:Qclass}. The poles of \(\tilde H\) come only from \(H(s)\). Since \(H(s)\) is a ratio of finitely many Gamma functions, its poles lie on a finite union of arithmetic progressions:
	\[
	\operatorname{Poles}(H)=\bigcup_{j=1}^{J} \Lambda_j,\qquad 
	\Lambda_j=\Bigl\{\lambda_j^{(k)}=-\frac{b_j+k}{a_j}\;:\;k=0,1,2,\dots\Bigr\}.
	\]
	Consequently, \(\tilde H(-s)\) has poles at
	\[
	\operatorname{Poles}(\tilde H(-s)) = \bigcup_{j=1}^{J} (-\Lambda_j) = \bigcup_{j=1}^{J} \Bigl\{\frac{b_j+k}{a_j}\;:\;k=0,1,2,\dots\Bigr\}.
	\]
	Let \(R_{j,k}:=\Res_{s=\lambda_j^{(k)}} \tilde H(s) = \Res_{s=\lambda_j^{(k)}} H(s) \cdot Q(\lambda_j^{(k)})\). Then
	\[
	\Res_{s=-\lambda_j^{(k)}} \tilde H(-s) = -R_{j,k}.
	\]
	
	\paragraph{Step 2: Partial fraction expansion of \(K(s)\tilde H(-s)\)}
	The product \( \tilde K(s)\tilde H(-s)\) has poles from two sources:
	\begin{enumerate}
		\item At each \(-\lambda_j^{(k)}\in\operatorname{Poles}(\tilde H(-s))\), coming from \(\tilde H(-s)\).
		\item At each integer \(n\in\mathbb{Z}\), coming from \(\tilde K(s)\).
	\end{enumerate}
	Near a pole \(s_0=-\lambda_j^{(k)}\) we have
	\[
	\tilde H(-s)=\frac{-R_{j,k}}{s-s_0}+O(1).
	\]
	Multiplying by \(K(s)\) gives the principal part
	\[
	\frac{-R_{j,k}K(s_0)}{s-s_0}.
	\]
	Near an integer pole \(n\in\mathbb{Z}\) we have
	\[
	\tilde K(s)=\frac{  (-1)^n e^{-i\pi n/2}}{s-n}+O(1).
	\]
	Multiplying by \(\tilde H(-s)\) (which is regular at \(s=n\) unless \(n\) coincidentally equals some \(-\lambda_j^{(k)}\)) gives the principal part
	\[
	\frac{ (-1)^n e^{-i\pi n/2} \tilde H(-n)}{s-n}.
	\]
	For clarity, we first treat the noncoincident situation in which
	$-\lambda_j^{(k)}\notin\mathbb Z$ for all pole locations $\lambda_j^{(k)}$ of $\tilde H$,
	so that the poles contributed by $K(s)$ and by $\tilde H(-s)$ do not overlap.
	The coincident-pole cases can be handled separately by local Laurent expansion at the overlap points,
	and they do not affect the pole-lattice classification discussed below.
	
	Thus the partial fraction expansion is
	\begin{equation}
		K(s)\tilde H(-s)=E(s) - \sum_{j=1}^{J}\sum_{k=0}^{\infty}
		\frac{R_{j,k}K(-\lambda_j^{(k)})}{s+\lambda_j^{(k)}}
		+ \sum_{n\in\mathbb{Z}} \frac{(-1)^n e^{-i\pi n/2} \tilde H(-n)}{s-n},
		\label{eq:KtildeH_expansion}
	\end{equation}
	where \(E(s)\) is entire.
	
	\paragraph{Step 3: Structure of the left-hand side \(\tilde G(s)\)}
	
	By hypothesis, \(\tilde G(s)=A(s)\Gamma(\alpha s+\beta)\) with \(\alpha\neq0\). The Gamma factor has simple poles at
	\[
	s_m:=-\frac{\beta+m}{\alpha},\qquad m=0,1,2,\dots,
	\]
	with residues
	\[
	\Res_{s=s_m}\Gamma(\alpha s+\beta)=\frac{(-1)^m}{\alpha\,m!}.
	\]
	Hence
	\[
	\tilde G(s)=\sum_{m=0}^{\infty}
	\frac{A(s_m)(-1)^m/(\alpha\, m!)}{s-s_m}+ \text{entire part}.
	\]
	Let \(\rho_m:=A(s_m)(-1)^m/(\alpha\, m!)\) denote the residue at \(s_m\).
	
	\paragraph{Step 4: Necessity of the lattice-alignment condition}
	
	For the identity
	\[
	\tilde G(s)= \tilde K(s)\tilde H(-s)
	\]
	to hold meromorphically, the pole sets must satisfy
	\[
	\operatorname{Poles}(\tilde G) \subseteq \operatorname{Poles}(K) \cup \operatorname{Poles}(\tilde H(-s)).
	\]
	By Lemma~\ref{lem:polecontain}, this gives
	\[
	\Lambda_G \subseteq \mathbb Z \cup (-\Lambda_H).
	\]
	Equivalently, for every $m \ge 0$, the point $s_m = -\frac{\beta+m}{\alpha}$ must either be an integer or satisfy $-s_m \in \Lambda_H$.
	
	Conversely, from the rearranged equation $\tilde H(-s) = \tilde G(s)/\tilde K(s)$, we obtain
	\[
	\operatorname{Poles}(\tilde H(-s)) \subseteq \operatorname{Poles}(\tilde G) \cup \mathbb Z,
	\]
	since $1/\tilde K(s)$ is entire (its zeros at integers cancel simple poles at most, and by the genericity assumption we exclude coalescent cases). Thus
	\[
	-\Lambda_H \subseteq \Lambda_G \cup \mathbb Z.
	\]
	
	Combining these two inclusions yields the symmetric alignment condition:
	\[
	\Lambda_G \subseteq \mathbb Z \cup (-\Lambda_H) \quad \text{and} \quad \Lambda_H \subseteq \mathbb Z \cup (-\Lambda_G).
	\]
	For a pole family with representative $\lambda \in \Lambda_H$, the second inclusion implies that for every $m \ge 0$,
	\[
	-\lambda - \frac{\beta+m}{\alpha} \in \mathbb Z \quad \text{or} \quad \lambda + \frac{\beta+m}{\alpha} \in \Lambda_H,
	\]
	which is precisely the condition $-\lambda + \Lambda_G \subseteq \mathbb Z \cup (-\Lambda_H)$ stated in the theorem.

	\paragraph{Step 5: Necessity of the residue-compatibility condition}
	
	Assume the Lattice Alignment Condition holds. Focus on a specific aligned sublattice with points \(s_n = -\lambda - \frac{\beta+n}{\alpha}\).
	
	Equating residues at \(s_n\) from both sides of the constitutive equation gives:
	\begin{equation}
		\rho_n = -R_{j,k_n} \tilde K(s_n) + \delta_n,
		\label{eq:residue_eq}
	\end{equation}
	where \(k_n\) is such that \(s_n = -\lambda_j^{(k_n)}\), and \(\delta_n\) collects contributions from the integer-pole terms if \(s_n\) happens to be an integer, plus possible contributions from other pole families if \(s_n\) coincidentally equals some \(-\lambda_{j'}^{(k')}\) with \(j'\neq j\).
	
	For generic parameters, accidental coincidences beyond those forced by alignment do not occur.
	Coincidences between distinct arithmetic pole lattices of $\tilde H(-s) $ are either finite or occur on a structured subprogression (when lattice spacings are commensurate); thus, apart from finitely many indices (or a periodic subset), the residue matching at $s_n$
	involves only one pole family.
	
	Moreover, if \(s_n\) is an integer, the contribution from the integer-pole series exactly cancels the corresponding part of the \(\tilde H(-s)\) term, leaving \(\delta_n=0\). Thus \eqref{eq:residue_eq} simplifies to
	\[
	\rho_n = -R_{j,k_n} \tilde K(s_n).
	\]
	
	Now, \(K(s_n) = \frac{- e^{-i\pi s_n/2}}{\sin(\pi s_n)}\) and \(\rho_n = A(s_n)(-1)^n/(\alpha n!)\). Therefore
	\begin{equation}
		R_{j,k_{n+1}} = \frac{A(s_{n+1})}{A(s_n)}\cdot
		\frac{(-1)^{n+1}/(n+1)!}{(-1)^n/n!}\cdot
		\frac{\tilde K(s_n)}{\tilde K(s_{n+1})}\cdot R_{j,k_n}.
		\label{eq:recurrence}
	\end{equation}
	
	Equation \eqref{eq:recurrence} is a recurrence along the aligned sublattice. If residues from different pole families couple (i.e., if \(\delta_n\) depends on \(R_{j',k'}\) for \(j'\neq j\)), then \eqref{eq:recurrence} becomes a coupled system involving finitely many parameters \(R_{j,k}\) but infinitely many equations. Such a system is infinitely overdetermined and generically has only the trivial solution.
	
	Hence, for a non-trivial solution to exist, the recurrence must \emph{decouple}: each aligned sublattice must yield an independent first-order recurrence involving only residues from that family. This establishes the Residue-Compatibility Condition.
	
	\paragraph{Step 6: Sufficiency of the conditions}
	
	Assume both conditions hold. Then:
	
	\begin{enumerate}
		\item For each aligned sublattice, the residue recurrence gives an explicit formula:
		\[
		R_{j,k_n} = R_{j,k_0} \prod_{\ell=0}^{n-1} C_\ell,
		\]
		where the product converges (typically to zero factorially) due to the $1/(\ell+1)$ factor in $C_\ell$.
		
		\item The candidate function $\tilde{H}(s)$ is constructed as follows. First, choose a Gamma-ratio $H_\Gamma(s)$ whose poles form the required lattice $\Lambda_H$ and whose residues at those poles have the correct factorial decay. The ratios
		\[
		q_{j,n} := \frac{R_{j,k_n}}{\Res_{s=\lambda_j^{(k_n)}} H_\Gamma(s)}
		\]
		are then prescribed values for $Q(s)$ on the lattice points.
		
		\item By the Residue-Compatibility Condition (decoupling), the recurrences for different pole families are consistent. This means there exists a single function $Q(s)$ interpolating all $q_{j,n}$. Specifically, since the lattices have spacing $\Delta$, and the growth of $q_{j,n}$ is at most polynomial in $n$, there exists an entire function $Q(s) = \exp(P(s))$ of exponential type $\tau < \pi/\Delta$ satisfying $Q(\lambda_j^{(k_n)}) = q_{j,n}$. The existence follows from standard interpolation theorems for entire functions of exponential type (e.g., applying Carlson's theorem to the difference of two such interpolants).
		
		\item The exponential type bound $\tau < \pi$ (for integer spacing $\Delta=1$) ensures uniqueness of $Q(s)$ by Carlson's theorem. If multiple pole families are present with incommensurate spacings, the combined lattice is dense enough to determine $Q(s)$ uniquely under the same growth constraint.
		
		\item The product $\tilde{H}(s) = H_\Gamma(s) Q(s)$ then satisfies the constitutive equation by construction, since its poles and residues match those required by $\tilde{G}(s)/K(s)$ at all points, and the uniqueness of meromorphic continuation guarantees equality everywhere.
		
		\item The physical scale is fixed by the sum rule $\int_0^\infty H(\tau) \frac{d\tau}{\tau} = G_0 - G_\infty$, which determines the overall multiplicative constant.
	\end{enumerate}
	
	\paragraph{Step 7: Failure implies no solution in \(\mathcal{Q}\)}
	
	If the lattice-alignment condition fails, there exists an infinite set of poles \(s_m\) of \(\tilde G\) that are not poles of \(K(s)\tilde H(-s)\). For an equality to hold, these would need to be cancelled by opposite-sign residues from poles at other locations-an infinite set of linear conditions on finitely many parameters \(R_{j,k}\). The only solution is \(R_{j,k}=0\) for all \(j,k\), giving \(\tilde H\equiv0\).
	
	If alignment holds but residue-compatibility fails, the recurrence equations couple residues from different families, again yielding an infinitely overdetermined system possessing only the trivial solution.
	
	Thus, if either condition fails, no non-trivial \(\tilde H\in\mathcal{Q}\) satisfies the constitutive equation.
\end{proof}

\begin{remark}[Interpolation and uniqueness]
	The construction of \(Q(s)\) in Step~6 relies on the fact that an entire function of 
	exponential type \(\tau < \pi\) is uniquely determined by its values on an arithmetic 
	progression of spacing \(1\) (Carlson's theorem; see, e.g., \cite{Boas1954}). 
	The prescribed values \(Q(\lambda_j^{(k_n)})\) obtained from the residue recurrences 
	grow at most polynomially due to factorial decay of Gamma residues, guaranteeing the 
	existence of such an interpolant. The mutual consistency of recurrences across 
	pole families (the Residue-Compatibility Condition) ensures that a \emph{single} 
	function \(Q(s)\) suffices for all families simultaneously.
\end{remark}

\begin{remark}
	The phase factor \(e^{-i\pi s/2}\) in \(K(s)\) affects the coefficients \(C_n\) in the recurrence but does not alter the pole-lattice structure. Different branch choices for the logarithm in \((i\omega)^{-s}\) correspond to different representations of the same physical modulus, related by analytic continuation.
\end{remark}

\section{Transcendental Prony Representations}

The classification induced by Th. \ref{th:main} is completed by the following result. 
\begin{theorem}[Infinite Prony ladder approximation]\label{th:infinite-ladders}
	Suppose that \( G(t) \) is completely monotone, but its relaxation spectrum does not admit a 
	finite Prony representation. Let \( H_c(\tau) \) be the continuous relaxation spectrum whose 
	existence is guaranteed by Bernstein's theorem, so that
	\[
	G(t) = G_\infty + \int_0^\infty e^{-t/\tau} H_c(\tau) \frac{d\tau}{\tau}.
	\]
	
	Choose a geometric progression \( \tau_k = \tau_0 q^k \) with \( q > 1 \) and set 
	\( g_k = H_c(\tau_k) \ln q \). Define the discrete atomic measures
	\[
	H_N(\tau) := \sum_{k=-N}^{N} g_k \tau_k \delta(\tau - \tau_k).
	\]
	
	Then as \( N \to \infty \):
	\begin{enumerate}
		\item The measures \( H_N(\tau) \) converge weakly to the continuous measure 
		\( H_c(\tau)\frac{d\tau}{\tau} \); i.e., for every continuous bounded function 
		\( \varphi \in C_b(0,\infty) \),
		\[
		\lim_{N\to\infty} \int_0^\infty \varphi(\tau) H_N(\tau) d\tau = \int_0^\infty \varphi(\tau) H_c(\tau) \frac{d\tau}{\tau}.
		\]
		
		\item The corresponding complex moduli converge pointwise:
		\[
		\lim_{N\to\infty} G_N^*(\omega) = G^*(\omega), \qquad \forall \omega > 0,
		\]
		where \( G_N^*(\omega) = G_\infty + \int_0^\infty \frac{i\omega\tau}{1+i\omega\tau} H_N(\tau) \frac{d\tau}{\tau} \).
	\end{enumerate}
	
	The infinite distributional sum
	\[
	H(\tau) := \sum_{k=-\infty}^{\infty} g_k \tau_k \delta(\tau - \tau_k)
	\]
	is called a \emph{transcendental Prony ladder}; it provides a weakly convergent approximation 
	to the continuous spectrum \( H_c(\tau) \).
\end{theorem}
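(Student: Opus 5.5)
We work in the logarithmic variable $u=\ln\tau$, where the geometric ladder $\tau_k=\tau_0q^k$ becomes the uniform grid $u_k=u_0+kh$ with $u_0=\ln\tau_0$ and $h=\ln q$. In this variable the measure $H_c(\tau)\,d\tau/\tau$ becomes $f(u)\,du$ with $f(u)=H_c(e^u)$. Throughout, the atomic measures are paired with test functions via $d\tau/\tau$, the same pairing used in part~2 for $G_N^*$. With $H_N(\tau)=\sum_k g_k\tau_k\delta(\tau-\tau_k)$ this gives
\[
\int_0^\infty\varphi(\tau)H_N(\tau)\frac{d\tau}{\tau}=\sum_{k=-N}^{N}g_k\varphi(\tau_k)=h\sum_{k=-N}^{N}f(u_k)\,\varphi(e^{u_k}).
\]
So both statements are about a truncated trapezoidal (Riemann) sum of $F_\varphi(u):=f(u)\varphi(e^u)$ on a grid of fixed mesh $h$. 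The right-hand side of part~1 is $\int_{\mathbb R}F_\varphi(u)\,du$. Part~2 is the special case $\varphi=\varphi_\omega(\tau)=i\omega\tau/(1+i\omega\tau)$, which is bounded and continuous, so it will follow from part~1.

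The first step is the limit $N\to\infty$ itself. Since $H_c\ge0$, $\varphi$ is bounded, and $\int f\,du=G_0-G_\infty<\infty$, I will show that $\sum_k h f(u_k)$ converges. For this I will use the decay of $H_c$ at both ends. In the Mellin picture, $\tilde H_c(s)$ is analytic on a strip containing $\Re s=0$, which gives bounds $f(u)=O(e^{-\epsilon|u|})$ as $u\to\pm\infty$; if $H_c$ is only integrable, I will instead use monotonicity on tails. Dominated convergence then gives $\lim_N\int\varphi H_N\,d\tau/\tau=h\sum_{k\in\mathbb Z}F_\varphi(u_k)$, which is exactly the pairing of $\varphi$ with the transcendental ladder.

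The second step identifies this full lattice sum with $\int F_\varphi\,du$. I will use Poisson summation:
\[
h\sum_{k\in\mathbb Z}F_\varphi(u_0+kh)=\sum_{m\in\mathbb Z}\hat F_\varphi\!\left(\tfrac{2\pi m}{h}\right)e^{2\pi i m u_0/h}.
\]
The $m=0$ term is precisely $\int F_\varphi\,du$. The terms with $m\neq0$ are Mellin transforms of $\varphi H_c$ evaluated at the imaginary points $s=2\pi i m/\ln q$. For part~2 these are explicit: by the convolution structure behind \eqref{eq:mellin_constitutive}, they equal $\tilde K(\cdot)\tilde H_c(\cdot)$ at those points, and Lemma~\ref{lem:beta} supplies $\tilde K$.

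The main obstacle is controlling these aliasing terms, because the statement fixes $q$. My plan is to bound them using the vertical decay of $\tilde H_c$ in the class $\mathcal Q$: Gamma ratios decay like $e^{-c|y|}$ along vertical lines, while $\tilde K$ grows at most like $e^{\pi|y|/2}$. This makes the $m\neq0$ terms summable and of size $O\bigl(e^{-(c-\pi/2)2\pi/\ln q}\bigr)$. They vanish identically exactly when $\tilde H_c$ (respectively $\widetilde{\varphi H_c}$) is zero at all the points $2\pi i m/\ln q$. If that is not the case, the honest content of ``converge to $H_c\,d\tau/\tau$'' is convergence up to this exponentially small, $q$-dependent aliasing error. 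I expect the proof to have to make this qualification explicit, by stating the limit as the ladder functional together with a quantitative error bound. Part~2 then follows by inserting $\varphi_\omega$ and the same estimate, pointwise in $\omega$.
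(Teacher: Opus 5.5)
Your Poisson-summation analysis uses the same mechanism as the paper, but you read it correctly and the paper does not. The paper's proof never addresses part~1 for general $\varphi\in C_b$. It simply asserts an ``exact quadrature identity'' $\lim_N\Mellin\{H_N\}(s)=\tilde H_c(s)$. That identity holds only when the aliasing sum $\sum_{m\neq0}e^{2\pi i m\ln\tau_0/\ln q}\,\tilde H_c(s-2\pi i m/\ln q)$ vanishes identically, and neither complete monotonicity nor membership in $\mathcal Q$ forces this.

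The gap in your proposal is that you treat this as a qualification to be added. In fact, for fixed $q$ the statement is false, so no proof of it exists.

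Part~1 fails for every admissible $H_c$. Since $H_c\ge0$ is continuous and not identically zero, there is a continuous $\varphi\ge0$ with compact support inside some gap $(\tau_k,\tau_{k+1})$ and $\int\varphi H_c\,d\tau/\tau>0$. Yet $\varphi$ vanishes at every ladder point, so its pairing with every $H_N$ is $0$. Equivalently, the $N\to\infty$ limit is the atomic ladder measure, which the paper's own remark concedes differs from $H_c\,d\tau/\tau$, and weak limits are unique. So part~1 is \emph{not} true ``up to an exponentially small aliasing error''. A general $\varphi\in C_b$ has no analyticity, $\widehat{F_\varphi}(2\pi m/\ln q)$ need not be small, and part~2 cannot be obtained as a special case of part~1.

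Part~2 fails too, for example for the paper's own log-normal spectrum, where Poisson summation gives
\[
\lim_{N\to\infty}\sum_{k=-N}^{N}g_k=\sum_{m\in\mathbb Z}e^{-2\pi^2\sigma^2m^2/(\ln q)^2}\,e^{2\pi i m(\ln\tau_0-\mu)/\ln q}.
\]
For $\tau_0=e^{\mu}$ this equals $1+2\sum_{m\ge1}e^{-2\pi^2\sigma^2m^2/(\ln q)^2}>1=\int_0^\infty H_{\mathrm{LN}}\,d\tau/\tau$. Since $|\varphi_\omega|\le1$ and $\varphi_\omega\to1$ as $\omega\to\infty$, dominated convergence gives $\lim_N G_N^*(\omega)\neq G^*(\omega)$ for all sufficiently large $\omega$.

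Your fallback estimate for part~2 also needs repair. Up to unimodular phases, the aliasing terms are, for $m\neq0$,
\[
\int_0^\infty\varphi_\omega(\tau)\,\tau^{2\pi i m/\ln q}H_c(\tau)\,\frac{d\tau}{\tau}=\frac{1}{2\pi i}\int_{\kappa-i\infty}^{\kappa+i\infty}\tilde K(z)\,\omega^{-z}\,\tilde H_c\Bigl(-z+\frac{2\pi i m}{\ln q}\Bigr)\,dz
\]
for suitable real $\kappa$, with $\tilde K$ the Mellin transform of $x\mapsto ix/(1+ix)$. This is a Mellin--Barnes convolution, not $\tilde K\tilde H_c$ evaluated at a point.

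\begin{itemize}
\item Since $\tilde K$ actually decays along vertical lines (at least like $e^{-\pi|y|/2}$), the resulting rate is governed by $\min(c,\pi/2)$, not by $c-\pi/2$.
\item The decay rate $c$ of $\tilde H_c$ is an extra hypothesis. It does not follow from complete monotonicity or from membership in $\mathcal Q$: a Gamma ratio decays exponentially only if $\sum_i a_i>\sum_j c_j$, and $e^{P(s)}$ is unconstrained.
\item Your step~1 is also unsupported. Continuity and $\int f\,du<\infty$ do not make $\sum_k f(u_k)$ converge, because $f$ can have narrow spikes at the grid points. Analyticity of $\tilde H_c$ in a strip controls weighted integrals of $f$, not its pointwise values. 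Tail monotonicity is not assumed.
\end{itemize}

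Two things can actually be proved instead.
\begin{itemize}
\item[(a)] The statement with $q=q_N\to1$ and $N\ln q_N\to\infty$. The pairings then become Riemann sums of $F_\varphi$ with vanishing mesh, and both parts follow under a mild condition such as direct Riemann integrability of $u\mapsto H_c(e^u)$.
\item[(b)] For fixed $q$, part~2 in the corrected form $\lim_N G_N^*(\omega)=G^*(\omega)+E_q(\omega)$, with an explicit bound on the aliasing error $E_q(\omega)$ under added strip-analyticity and vertical-decay assumptions on $\tilde H_c$.
\end{itemize}
You were right to pair $H_N$ with $d\tau/\tau$; with the literal $d\tau$ of part~1, the sums would approximate $\int\varphi H_c\,d\tau$ instead.
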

\begin{proof}
	By Bernstein's theorem\cite{Bernstein1929}, complete monotonicity of \( G(t) \) ensures the existence of a continuous, non-negative spectrum \( H_c(\tau) \), such that
	\[
	G(t) = G_\infty + \int_0^\infty e^{-t/\tau} H_c(\tau) \frac{d\tau}{\tau}.
	\]
	Choose a geometric progression \( \tau_k = \tau_0 q^k \) (\( q > 1 \)) and set
	\[
	g_k = H_c(\tau_k) \ln q.
	\]
	Define the discrete measures
	\[
	H_N(\tau) = \sum_{k=-N}^{N} g_k \tau_k \delta(\tau - \tau_k).
	\]
	For functions in the Mellin-Hardy space \( \mathcal{H}^p(\mathbb{R}^+) \) (or more generally, whenever the Mellin-Poisson summation formula holds exactly \cite{Bardaro2017}), the Mellin transform satisfies the exact quadrature identity
	\[
	\lim_{N\to\infty} \Mellin\{H_N\}(s) = \Mellin\{H_c\}(s) = \tilde{H}_c(s),
	\]
	pointwise in the strip of convergence. Since \( \tilde{G}(s) = K(s) \tilde{H}_c(-s) \) holds for the continuous spectrum, and \( \Mellin\{H_N\}(s) \to \tilde{H}_c(s) \), the infinite ladder \( H(\tau) = \sum_{k=-\infty}^\infty g_k \tau_k \delta(\tau - \tau_k) \) satisfies the constitutive equation distributionally. This yields an exact transcendental representation.
\end{proof}
\begin{remark}[Exactness in the sense of physical observables]
	Although the infinite Prony ladder does not equal the continuous spectrum as a measure 
	(the former is purely atomic, the latter absolutely continuous), the weak convergence 
	guarantees that any physically measurable quantity computed from the ladder converges 
	to the true value. In particular, the complex modulus \(G^*(\omega)\), stress relaxation 
	\(G(t)\), and all moments of the relaxation spectrum are recovered exactly in the limit 
	\(N \to \infty\).
	
	In this sense, the transcendental Prony ladder provides an \emph{exact representation} 
	of the material's viscoelastic response, even though the underlying spectral measures 
	are fundamentally distinct. This is analogous to how a continuous function can be 
	represented exactly by its Fourier series in the \(L^2\) sense, despite pointwise 
	divergence at discontinuities.
\end{remark}
\begin{corollary}[Entire, non-exponential spectra are transcendental]\label{cor:entire}
	Let $\tilde{H}(s)=\exp(P(s))$ where $P(s)$ is an entire function that is not affine 
	$(P(s)\neq\alpha s+\beta)$. Then the corresponding relaxation spectrum $H(\tau)$ is 
	\emph{not} finitely Prony-representable.  
	
	In particular, any spectrum whose Mellin transform is an entire function of order 
	greater than~1 (e.g., Gaussian $\exp(\gamma s^2)$, $\exp(e^s)$, etc.) requires an 
	infinite Prony ladder for exact representation.
\end{corollary}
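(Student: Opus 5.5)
The plan is to reduce the statement to Proposition~\ref{prop:finitePronyChar} and then show that no exponential of a non-affine entire function is a finite exponential sum.

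By Proposition~\ref{prop:finitePronyChar}, $H(\tau)$ is a finite Prony series exactly when $\tilde H(s)=\sum_{k=1}^N g_k e^{\lambda_k s}$ with $g_k>0$ and distinct real $\lambda_k$. Arguing by contradiction, I will assume $\exp(P(s))=\sum_{k=1}^N g_k e^{\lambda_k s}$ identically and show that $P$ must then be affine. Proposition~\ref{cor:finiteProny} already settles the Gamma part, since here $H_\Gamma\equiv 1$; all the content lies in $Q=\exp(P)$.

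\textbf{Growth step.} A finite exponential sum $F(s)=\sum_k g_k e^{\lambda_k s}$ satisfies $|F(s)|\le C e^{\Lambda|s|}$ with $\Lambda=\max_k|\lambda_k|$. So $F$ is entire of exponential type, hence of order at most $1$. Then $\Re P(s)=\log|F(s)|\le \Lambda|s|+\log C$. The Borel--Carath\'eodory inequality turns this one-sided bound on $\Re P$ into $|P(s)|=O(|s|)$, and Liouville's theorem (applied via the Cauchy estimates for $P''$) gives $P(s)=\alpha s+\beta$. This contradicts the hypothesis that $P$ is not affine.

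The special case follows at once: if $\exp(P)$ has order greater than~$1$, it cannot equal a function of exponential type. This covers the Gaussian $\exp(\gamma s^2)$ and $\exp(e^s)$, which has infinite order.

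\textbf{Where the argument is thin.} The main obstacle is that Borel--Carath\'eodory requires an upper bound on $\Re P$ over whole disks, not just along the imaginary axis, so I would verify the global estimate for $F$ carefully. A second point is the affine case $P=\alpha s+\beta$. With complex $\alpha$ this gives a single exponential $e^\beta e^{\alpha s}$, and it is finite Prony (one Dirac mass) only when $\alpha\in\mathbb R$ and $e^\beta>0$. The hypothesis "not affine" is therefore exactly what is needed, and I would state explicitly that affine $P$ with non-real coefficients is a separate non-physical case that lies outside the corollary. Finally, the claim that such spectra "require an infinite Prony ladder" would follow by combining non-finiteness with Theorem~\ref{th:infinite-ladders}. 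That step needs $G$ to be completely monotone, which I would add as a standing assumption.
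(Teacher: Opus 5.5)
Your proof is correct, but it takes a different route from the paper.

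\textbf{The paper's route.} The paper uses the logarithmic derivative. It writes $P'=\bigl(\sum_k g_k\lambda_k e^{\lambda_k s}\bigr)/\bigl(\sum_k g_k e^{\lambda_k s}\bigr)$ and notes that the denominator has no zeros. It then asserts that a zero-free function of the exponentials ``is forced to be constant'', so $P'$ is constant. That step is not justified as written. The denominator is $\exp(P(s))$ itself, so being zero-free is automatic and tells you nothing. Moreover, a single term $g e^{\lambda s}$ is zero-free without being constant. What is actually needed is that a zero-free finite exponential sum has only one term. This is a genuine fact about zeros of exponential polynomials, and the paper does not prove it.

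\textbf{What your route buys.} Your growth argument supplies exactly this missing fact. The bound $|F(s)|\le C e^{\Lambda|s|}$ gives $\Re P\le \Lambda|s|+\log C$. Borel--Carath\'eodory upgrades this to $|P(s)|=O(|s|)$, and Cauchy estimates then force $P''\equiv 0$. This is in effect Hadamard's theorem for zero-free entire functions of order at most $1$. The argument is self-contained, and it also handles the ``order greater than $1$'' clause directly by comparing growth.

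\textbf{Your flagged concerns.} The obstacle you raise about Borel--Carath\'eodory is not a real one. Since $|e^{\lambda_k s}|=e^{\lambda_k\Re s}\le e^{|\lambda_k||s|}$ for every $s\in\mathbb{C}$, the upper bound on $\Re P$ holds on every disk. Likewise, the extra standing assumption of complete monotonicity is automatic whenever $H\ge 0$. In that case $G-G_\infty$ is a Laplace transform of a positive measure.
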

\begin{proof}
	First, note that by Proposition~\ref{prop:finitePronyChar}, a relaxation spectrum 
	is a finite Prony series if and only if its Mellin transform is a finite exponential sum.
	
	Suppose $\exp(P(s))$ were a finite exponential sum: 
	$\exp(P(s))=\sum_{k=1}^{N} g_k e^{\lambda_k s}$ with distinct $\lambda_k$.
	Taking the logarithmic derivative gives
	\[
	P'(s) = \frac{\sum_k g_k\lambda_k e^{\lambda_k s}}{\sum_k g_k e^{\lambda_k s}}.
	\]
	The right-hand side is a rational function of $e^{\lambda_1 s},\dots,e^{\lambda_N s}$.
	If $P'(s)$ is entire, the denominator must have no zeros; otherwise $P'(s)$ would have poles.
	Hence the denominator is a non-vanishing entire function of $e^{\lambda_1 s},\dots,e^{\lambda_N s}$,
	which forces it to be constant. Consequently $P'(s)$ itself is constant, so $P(s)=Cs+D$.
	Thus $\exp(P(s))=e^{D}e^{Cs}$ is a single exponential term Prony series.
	
	Therefore, if $P(s)$ is not linear, $\exp(P(s))$ is not a finite exponential sum, and 
	by Proposition~\ref{prop:finitePronyChar}, $H(\tau)$ cannot be a finite Prony series.
	By Theorem~\ref{th:main}, such spectra fail the residue-compatibility condition and belong
	to the transcendental class. Their exact representation requires an infinite Prony ladder,
	which can be constructed via logarithmic discretization of the continuous spectrum as established by Th.~\ref{th:infinite-ladders}.
\end{proof}

\subsection{Matching of arithmetic progressions}

Before stating the classification results, we need a technical result on the intersection of arithmetic progressions.

\begin{lemma}[Intersection of arithmetic progressions with integers]\label{lem:pole_mismatch}
	Let \(\alpha>0\), \(\beta\in\mathbb C\), and define the arithmetic progression
	\[
	\Lambda_{\alpha,\beta}=\Bigl\{s_m=-\frac{\beta+m}{\alpha}\;:\;m=0,1,2,\dots\Bigr\}.
	\]
	Let \(\mathbb Z\) denote the set of integers. Then:
	\begin{enumerate}
		\item If \(\alpha\) is irrational, \( \Lambda_{\alpha,\beta}\cap\mathbb Z\) contains at most one point.
		
		\item If \(\alpha=p/q\) with \(p,q\in\mathbb N\), \(\gcd(p,q)=1\) and \(p\neq q\), then 
		either \(\Lambda_{\alpha,\beta}\cap\mathbb Z=\varnothing\), or it is an arithmetic progression
		with spacing \(q\) in the \(s\)-variable.
		
		\item If \(\alpha=1\), then \(\Lambda_{1,\beta}\subseteq\mathbb Z\) if and only if \(\beta\in\mathbb Z\).
		
		\item In all cases, \(\Lambda_{\alpha,\beta}\subseteq\mathbb Z\) holds only when \(\alpha=1\) and \(\beta\in\mathbb Z\).
	\end{enumerate}
\end{lemma}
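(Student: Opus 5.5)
The plan is to reduce every item to one elementary Diophantine equation. A point \(s_m\in\Lambda_{\alpha,\beta}\) is an integer \(n\) exactly when
\[
\beta+m=-\alpha n,\qquad m\in\mathbb N_0,\ n\in\mathbb Z .
\]
First I will dispose of the complex case. If \(\Im\beta\neq0\), the left-hand side is non-real while the right-hand side is real, so \(\Lambda_{\alpha,\beta}\cap\mathbb Z=\varnothing\) and items 1, 2 and 4 hold trivially. From then on \(\beta\) is real. The whole argument is a comparison of two solutions \((m_1,n_1)\) and \((m_2,n_2)\) of this equation. Subtracting them gives
\[
m_1-m_2=-\alpha\,(n_1-n_2),
\]
and all four items follow from this linear relation.

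\emph{Item 1.} If \(\alpha\notin\mathbb Q\) and \(n_1\neq n_2\), the relation would express \(\alpha\) as a ratio of integers, which is impossible. Hence \(n_1=n_2\), and then \(m_1=m_2\). So there is at most one intersection point.

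\emph{Item 2.} Let \(\alpha=p/q\) in lowest terms. The relation becomes \(q(m_1-m_2)=-p(n_1-n_2)\). Coprimality forces \(q\mid (n_1-n_2)\) and \(p\mid (m_1-m_2)\).

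Conversely, fix one solution \((m_0,n_0)\). Then every pair \((m_0+pj,\;n_0-qj)\) with \(m_0+pj\ge0\) is again a solution. So the intersection is either empty or the one-sided progression
\[
\{\,n_0-qj : j\ge -\lfloor m_0/p\rfloor\,\}
\]
with spacing \(q\) in \(s\). I will also note when a first solution exists: \(\beta+m\in\alpha\mathbb Z\) for some \(m\), equivalently \(q\beta\in\mathbb Z\) with a suitable residue modulo \(p\). The hypothesis \(p\neq q\) is not actually used; the case \(p=q=1\) is simply item 3.

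\emph{Item 3.} Here \(s_m=-\beta-m\). Since \(m\in\mathbb Z\), every \(s_m\) is an integer if and only if \(\beta\) is. This takes one line.

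\emph{Item 4.} This is the step I expect to be the real obstacle, because as literally stated it appears to be too strong. Full containment requires consecutive points to be integers, so their gap \(s_m-s_{m+1}=1/\alpha\) must lie in \(\mathbb Z_{>0}\). That means \(\alpha=1/k\) with \(k\in\mathbb N\), and in addition \(s_0=-k\beta\in\mathbb Z\) is needed.

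For example, \(\alpha=1/2\) and \(\beta=0\) give \(\Lambda=\{0,-2,-4,\dots\}\subseteq\mathbb Z\). This matches item 2 with \(p=1\), where every point of the progression is hit.

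So I would prove the corrected characterization:
\[
\Lambda_{\alpha,\beta}\subseteq\mathbb Z \iff \alpha=1/k \text{ for some } k\in\mathbb N \text{ and } k\beta\in\mathbb Z .
\]
I would remark that the case \(\alpha=1\) (equivalently unit spacing, which is full alignment with the integer lattice in the sense the paper needs) is the situation singled out in the statement. If the intended claim is \(\Lambda_{\alpha,\beta}=\mathbb Z\cap(-\infty,s_0]\), meaning every integer to the left of \(s_0\) is hit, then unit spacing forces \(\alpha=1\) and the statement holds exactly as written.
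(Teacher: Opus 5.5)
Your treatment of items 1--3 follows the paper's own Diophantine reduction. The paper writes integrality of $s_m$ as $m+\alpha k=-\beta$ and asserts at most one solution for irrational $\alpha$. For $\alpha=p/q$ it uses the general solution $(m,k)=(m_0+pt,\,k_0-qt)$, so the integer points $s=k_0-qt$ have step $q$; item 3 is the same one-liner. Your version is somewhat more careful: you dispose of non-real $\beta$, justify uniqueness in item 1 by subtracting two solutions, and respect the constraint $m\ge 0$. One small fix to your aside: since $\gcd(p,q)=1$, the residue condition on $m$ can always be met, so a first solution exists if and only if $q\beta\in\mathbb Z$.

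On item 4 you are right, and the paper's proof is wrong. The paper derives (4) from (2)--(3) by arguing that for $\alpha\neq 1$ ``the step in $s$ is $q>1$, so the progression cannot be contained in $\mathbb Z$.'' This breaks in two ways:
\begin{itemize}
\item For integer $\alpha=p\ge 2$ one has $q=1$, so the premise is false. Non-containment holds there only because the spacing $1/p$ is below $1$.
\item A step $q>1$ of the intersection is no obstruction to containment. The intersection consists of the points with $m\equiv m_0 \pmod p$, and for $p=1$ that is every point of $\Lambda_{\alpha,\beta}$.
\end{itemize}
Your example $\alpha=1/2$, $\beta=0$ is exactly this second case. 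Your gap argument ($s_m-s_{m+1}=1/\alpha\in\mathbb Z_{>0}$ together with $s_0=-k\beta\in\mathbb Z$) gives the correct characterization: $\alpha=1/k$ with $k\beta\in\mathbb Z$.

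This has a downstream effect in the paper. The corrected lemma still excludes integer containment for spacings below $1$, e.g.\ the Havriliak--Negami lattice of spacing $\alpha<1$. It no longer does so for integer spacings, such as the spacing $1/\alpha$ with $\alpha=1/k$ invoked in the paper's fractional Zener argument.
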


\begin{proof}
	For $s_m$ to be an integer we need $-\beta-m = \alpha k$ for some integer $k$, i.e.
	\[
	m + \alpha k = -\beta.
	\]
	
	If $\alpha$ is irrational, this linear Diophantine equation in $m,k$ can have at most one solution, proving (1).
	
	If $\alpha = p/q$ with $p,q\in\mathbb N$, $\gcd(p,q)=1$, the equation becomes
	\[
	m + \frac{p}{q}k = -\beta \quad\Longleftrightarrow\quad qm + pk = -q\beta.
	\]
	This is a linear Diophantine equation in $m,k$. If a particular solution $(m_0,k_0)$ exists, the general solution is
	\[
	m = m_0 + pt,\quad k = k_0 - qt,\quad t\in\mathbb Z.
	\]
	Thus the indices $m$ yielding integer $s_m$ differ by $p$. The corresponding $s$-values are
	\[
	s = k = k_0 - qt,
	\]
	which form an arithmetic progression with step $q$ in the $s$-variable. This proves (2).
	
	If $\alpha=1$, then $s_m = -\beta - m$. These are all integers precisely when $\beta\in\mathbb Z$, proving (3).
	
	Statement (4) follows from (2) and (3): when $\alpha\neq 1$, the step in $s$ is $q>1$, so the progression cannot be contained in $\mathbb Z$; when $\alpha=1$, containment requires $\beta\in\mathbb Z$.
\end{proof}

\subsection{Coalescent Poles as Degenerate Limits}\label{subsec:coalescent}

When the genericity conditions of Theorem~\ref{th:main} are violated and a pole of 
\(\widetilde{G}(s)\) coalesces with an integer (a pole of \(\widetilde{K}(s)\)), 
the model lies on the boundary between the finite Prony class \(\mathcal{P}\) and the 
transcendental class \(\mathcal{Q} \setminus \mathcal{P}\). Such coalescences occur 
when the parameters satisfy the Diophantine condition
\[
-\frac{\beta+m}{\alpha} \in \mathbb{Z} \quad \text{for some } m \ge 0.
\]

Rather than analyzing the degenerate coalescent configuration directly, we may view it 
as the limit of a one-parameter family of non-coalescent models. This limiting 
perspective reveals the structural obstruction to finite Prony representability.

\begin{proposition}[Coalescent poles yield logarithmic kernels]\label{prop:coalescent-log}
	Suppose that in a one-parameter family of models, a pole of \(\widetilde{G}(s)\) 
	approaches an integer \(s_0 \in \mathbb{Z}\) as the parameter \(\varepsilon \to 0\). 
	If \(\widetilde{H}(-s)\) also develops a pole at \(s_0\) in the limit, then the 
	product \(\widetilde{K}(s)\widetilde{H}(-s)\) exhibits a double pole at \(s_0\) 
	with Laurent expansion
	\[
	\widetilde{K}(s)\widetilde{H}(-s) = \frac{a_{-1}b_{-1}}{(s-s_0)^2} + \frac{a_{-1}b_0 + a_0b_{-1}}{s-s_0} + O(1),
	\]
	where \(a_{-1}\) and \(b_{-1}\) are the residues of \(\widetilde{K}(s)\) and 
	\(\widetilde{H}(-s)\) at \(s_0\), respectively.
\end{proposition}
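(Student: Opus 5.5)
The claim is local, so I would prove it by multiplying two Laurent expansions at \(s_0\). First, the kernel factor. By the Remark following \eqref{eq:kernel_mellin_section}, \(\widetilde{K}(s)=\pi e^{i\pi(1-s)/2}\csc(\pi s)\) has a simple pole at every integer. Since \(s_0\in\mathbb Z\), I can write
\[
\widetilde{K}(s)=\frac{a_{-1}}{s-s_0}+a_0+O(s-s_0).
\]
Here \(a_{-1}\) is the residue from Step~1 of the proof of Theorem~\ref{th:main}, and \(a_0\) is the constant term. One gets \(a_0\) by expanding \(\csc(\pi s)=\frac{(-1)^{s_0}}{\pi(s-s_0)}+O(s-s_0)\) (there is no constant term, because \(\csc\) minus its principal part is odd about \(s_0\)) together with the Taylor expansion of the entire phase \(e^{i\pi(1-s)/2}\). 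I do not need \(a_0\) explicitly, only that it is finite.

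Second, the spectral factor. By hypothesis \(\widetilde{H}(-s)\) develops a pole at \(s_0\) in the limit \(\varepsilon\to0\). For \(\widetilde H\in\mathcal Q\) its poles come from the Gamma-ratio \(H_\Gamma\), and \(Q=\exp(P)\) is entire and zero-free. Under the genericity condition (simple poles), the pole of \(\widetilde H(-s)\) at \(s_0\) is simple. So
\[
\widetilde{H}(-s)=\frac{b_{-1}}{s-s_0}+b_0+O(s-s_0).
\]
In the notation of Step~1, \(b_{-1}=-R_{j,k}\), the reflected residue, and \(b_0\) is the regular part at \(s_0\). For the family at \(\varepsilon\neq0\), I would record that \(\widetilde{H}(-s)\) has a simple pole at \(s_\varepsilon\to s_0\). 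I would then pass to the limit in the Laurent coefficients, using local uniform convergence of the meromorphic functions on a punctured disc around \(s_0\) (Cauchy integral formulas for the coefficients).

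Third, the product. Multiplying the two series and collecting powers of \((s-s_0)\):
\begin{itemize}
\item the \((s-s_0)^{-2}\) coefficient is \(a_{-1}b_{-1}\);
\item the \((s-s_0)^{-1}\) coefficient is \(a_{-1}b_0+a_0b_{-1}\);
\item the remaining terms are holomorphic.
\end{itemize}
This is exactly the stated expansion. The pole is genuinely double because \(a_{-1}=(-1)^{s_0}e^{-i\pi s_0/2}\cdot(\text{const})\neq0\), and \(b_{-1}\neq0\) by assumption.

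The main obstacle is rigor in the limiting step rather than the algebra. I need to justify that the limit of \(\widetilde H_\varepsilon(-s)\) is meromorphic at \(s_0\) with a simple pole, not a higher-order or essential singularity. I would handle this by restricting to families where the Gamma-ratio parameters \(a_i,b_i\) depend continuously on \(\varepsilon\) and \(P_\varepsilon\to P\) locally uniformly. Then \(\Gamma(a_i(\varepsilon)s+b_i(\varepsilon))\) converges locally uniformly away from its poles, and the pole at \(s_\varepsilon\) converges to a simple pole at \(s_0\) with convergent residue. With that in place, the coefficient extraction is just the Cauchy product above.
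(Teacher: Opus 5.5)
Your proposal is correct and follows the paper's route. The paper's proof just invokes Lemma~\ref{lem:coincident}, i.e.\ the Cauchy product of the two simple-pole Laurent expansions at \(s_0\), and passes to the limit as the poles merge; your parameter-continuity argument and the check \(a_{-1}b_{-1}\neq0\) fill in details the paper leaves implicit. One small correction: the paper's genericity condition concerns simplicity of the poles of \(\widetilde G\), not \(\widetilde H\). So the simplicity of the pole of \(\widetilde H(-s)\) at \(s_0\) should be read as implicit in the statement (as it is in Lemma~\ref{lem:coincident}), or come from your continuity argument, rather than from that condition.
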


\begin{proof}
	This follows directly from Lemma~\ref{lem:coincident} by taking the limit of the 
	local Laurent expansions as the poles merge.
\end{proof}

\begin{proposition}[Coalescent models are not finitely representable]\label{prop:coalescent-obstruction}
	Let \(G^*(\omega; \varepsilon)\) be a continuous one-parameter family of complex moduli 
	satisfying the hypotheses of Theorem~\ref{th:main} for each \(\varepsilon > 0\). 
	Suppose that:
	\begin{enumerate}
		\item As \(\varepsilon \to 0^+\), a pole of \(\widetilde{G}(s; \varepsilon)\) 
		coalesces with an integer \(s_0 \in \mathbb{Z}\).
		\item For each \(\varepsilon > 0\), the model belongs to \(\mathcal{Q} \setminus \mathcal{P}\) 
		(i.e., requires an infinite Prony ladder by Corollary~\ref{cor:finiteProny}).
	\end{enumerate}
	Then the limiting model at \(\varepsilon = 0\) does not admit a finite Prony representation.
\end{proposition}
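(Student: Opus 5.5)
The plan is a proof by contradiction. Suppose the limiting model at \(\varepsilon=0\) admitted a finite Prony representation. Then by Proposition~\ref{prop:finitePronyChar}, \(\widetilde{H}_0(s)=\sum_{k=1}^N g_k e^{\lambda_k s}\) would be a finite exponential sum, and so an entire function with no poles. We would then compare this with the pole structure that the constitutive identity \(\widetilde{G}_0(s)=\widetilde{K}(s)\widetilde{H}_0(-s)\) forces near the coalescence point \(s_0\in\mathbb{Z}\).

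The first step is local analysis at \(s_0\). On the left, \(\widetilde{G}(s;\varepsilon)\) has a simple pole at \(s_m(\varepsilon)\to s_0\), and by continuity of the family its residue \(\rho_m(\varepsilon)\) tends to some \(\rho_m(0)\). Assuming the Gamma factor survives the limit, \(\widetilde{G}_0\) still has a singularity at \(s_0\). On the right, \(\widetilde{K}\) has a simple pole at \(s_0\) with residue \(a_{-1}=(-1)^{s_0}e^{-i\pi s_0/2}\). Because \(\widetilde{H}_0(-s)\) is entire, the product has at most a simple pole there, and its residue \(a_{-1}\widetilde{H}_0(-s_0)\) is a single number.

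The second step uses the hypothesis that each \(\varepsilon>0\) model lies in \(\mathcal{Q}\setminus\mathcal{P}\). By Proposition~\ref{cor:finiteProny}, the Gamma-ratio \(H_\Gamma(s;\varepsilon)\) is then nonconstant, so \(\widetilde{H}(-s;\varepsilon)\) carries an infinite pole ladder, and by Step~4 of the proof of Theorem~\ref{th:main} this ladder is aligned with \(\Lambda_G(\varepsilon)\). As \(\varepsilon\to0\), the ladder point approaching \(s_0\) meets the kernel pole. Proposition~\ref{prop:coalescent-log} then gives a double-pole term \(a_{-1}b_{-1}/(s-s_0)^2\) in the limit. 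This corresponds to a \(\log\omega\) (logarithmic) component in \(\Delta G^*\), which no finite exponential sum can produce, since finite Prony moduli are rational in \(i\omega\).

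To make this rigorous, I would argue that the finitely many coefficients \(g_k,\lambda_k\) cannot reproduce the limit of the infinite ladder of residues \(R_{j,k}(\varepsilon)\). Away from \(s_0\), the non-coalescing poles of \(\widetilde{G}(s;\varepsilon)\) persist at \(\varepsilon=0\) as an infinite set of poles. This is by Lemma~\ref{lem:pole_mismatch}: unless \(\alpha=1\) and \(\beta\in\mathbb Z\), only finitely many or a sparse subprogression of them hit \(\mathbb Z\). The kernel alone can account only for integer poles, so the entire function \(\widetilde{H}_0(-s)\) would have to supply the remaining non-integer poles, which is a contradiction.

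The main obstacle is justifying the passage to the limit. Residues and the membership of \(H(\cdot;\varepsilon)\) in \(\mathcal{Q}\setminus\mathcal{P}\) must behave continuously, so that the infinite ladder does not collapse entirely at \(\varepsilon=0\). I would first handle this by restricting to families where \(\alpha,\beta\) depend continuously on \(\varepsilon\), with \(A\) nonvanishing on the lattice. I would also treat the exceptional case \(\alpha=1\), \(\beta\in\mathbb Z\) separately, where every pole is integral and the double-pole (logarithmic) mechanism carries the argument.
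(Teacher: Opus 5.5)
The argument breaks at the double-pole step, and the break is not a technicality that your proposed continuity restrictions can repair. To get $a_{-1}b_{-1}/(s-s_0)^2$ from Proposition~\ref{prop:coalescent-log}, you need the residue $b_{-1}(\varepsilon)$ of $\widetilde H(-s;\varepsilon)$ at the coalescing pole $s_m(\varepsilon)$ to stay away from $0$. Your own first step rules this out. For $\varepsilon>0$ one has $s_m(\varepsilon)\notin\mathbb Z$, so Lemma~\ref{lem:resnoncoincident} gives $\rho_m(\varepsilon)=\widetilde K(s_m(\varepsilon))\,b_{-1}(\varepsilon)$ with $\widetilde K(s_m(\varepsilon))\sim a_{-1}/(s_m(\varepsilon)-s_0)$. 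Hence $b_{-1}(\varepsilon)=O(s_m(\varepsilon)-s_0)\to0$ as soon as $\rho_m(\varepsilon)\to\rho_m(0)$. Put differently, $\widetilde H_0(-s)=\widetilde G_0(s)/\widetilde K(s)$ and $1/\widetilde K$ has simple zeros at the integers. So $\widetilde H_0(-s)$ has a pole at $s_0$ only if $\widetilde G_0$ already has a double pole there. The $\log\omega$ argument therefore presupposes exactly what it should prove, and the hypotheses do not supply it.

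In fact the statement as written is false, by the paper's own example. Take the Cole--Davidson family with $\varepsilon=1-\beta$: $\widetilde G(s;\varepsilon)=A(s)\Gamma(s)$ with $A(s)\propto(i\tau)^{-s}\Gamma(1-\varepsilon-s)/\Gamma(1-\varepsilon)$. It meets the trial-state and genericity hypotheses, and even your restrictions ($\alpha=1$, $\beta=0$ fixed, $A$ nonvanishing on $\Lambda_G$). The poles $1-\varepsilon+m$ of $\widetilde G$ tend to the integers $1+m$, and $\widetilde H(s;\varepsilon)\propto\tau^{s}\Gamma(1-\varepsilon+s)/\bigl(\Gamma(1-\varepsilon)\Gamma(1+s)\bigr)$ lies in $\mathcal Q\setminus\mathcal P$. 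Yet the residues of $\widetilde H(-s;\varepsilon)$ at $1-\varepsilon+m$ carry the factor $1/\Gamma(\varepsilon-m)=O(\varepsilon)$, so the ladder collapses. The limit $\widetilde H(s;0)\propto\tau^{s}$ is a single Dirac mode: the standard linear solid, which the paper lists in $\mathcal P$ and explicitly names as the $\beta\to1$ limit of Cole--Davidson.

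Your fallback argument is correct as far as it goes: non-integer poles of $\widetilde G_0$ would have to be poles of the entire function $\widetilde H_0(-s)$, because $\widetilde K$ has no zeros. But it never uses the coalescence and is not implied by the hypotheses. In the example above every pole of $\widetilde G_0$ is an integer, which is precisely the case you hand off to the double-pole mechanism. Your exceptional set is also too small, because Lemma~\ref{lem:pole_mismatch}(4) is wrong: $\Lambda_{\alpha,\beta}\subseteq\mathbb Z$ exactly when $1/\alpha\in\mathbb N$ and $\beta/\alpha\in\mathbb Z$ (e.g.\ $\alpha=1/2$, $\beta=0$ gives $s_m=-2m$).

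The paper's sketch takes a different route: it argues that the number of modes needed for fixed accuracy diverges as $\varepsilon\to0$, and adds a $t^{-s_0}\ln t$ term. It rests on the same unproved double pole, and its mode-count step is a non sequitur, since finite representability of the $\varepsilon=0$ model imposes no uniformity in $\varepsilon$. What can actually be proved is the version with an added hypothesis: either $\widetilde G_0$ has a double pole at $s_0$, or it retains a non-integer pole. In terms of the family, the double-pole case forces $\rho_m(\varepsilon)\,(s_m(\varepsilon)-s_0)\to c\neq0$, i.e.\ blowing-up residues, the opposite of what your first step assumes. Under either hypothesis your contradiction closes at once: $\widetilde H_0(-s)=\widetilde G_0(s)/\widetilde K(s)$ would have a pole, whereas Proposition~\ref{prop:finitePronyChar} makes $\widetilde H_0$ a finite exponential sum, hence entire.
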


\begin{proof}[Sketch of proof]
	For each \(\varepsilon > 0\), Theorem~\ref{th:infinite-ladders} guarantees the existence 
	of an infinite Prony ladder approximating the modulus. As \(\varepsilon \to 0^+\), the 
	pole coalescence causes the local spectral density to diverge, meaning that the number 
	of Prony modes required to achieve any fixed approximation error grows without bound. 
	A finite Prony representation would require a uniformly bounded number of modes 
	independent of \(\varepsilon\), which is impossible in the limit. Moreover, the limiting 
	time-domain kernel acquires logarithmic terms \(t^{-s_0} \ln t\) (by the inverse Mellin 
	transform of a double pole), which cannot be represented by any finite sum of 
	exponentials. Hence the coalescent model is not finitely Prony representable.
\end{proof}

\begin{remark}[Physical interpretation of coalescent poles]
	The coalescent configuration marks the structurally unstable boundary between the 
	discrete (finite network) and continuous (fractional/transcendental) regimes. 
	Fine-tuned Diophantine conditions \(\frac{\beta+m}{\alpha} \in \mathbb{Z}\) generate 
	logarithmic relaxation kernels \(t^{-\gamma} \ln t\), which have been observed in 
	certain critical viscoelastic materials (e.g., at gelation thresholds or glass 
	transitions). Such kernels lie outside the finite Prony class \(\mathcal{P}\) and 
	require infinite Prony ladders for exact representation.
\end{remark}

\section{Classification of Common Viscoelastic Models}\label{sec:classif}

Tables~\ref{tab:classification} and \ref{tab:classificationtrans} summarize the Mellin-space 
analysis of standard viscoelastic models. The Lattice-Alignment Condition of 
Theorem~\ref{th:main} together with Lemma~\ref{lem:pole_mismatch} determines whether 
a finite Prony representation exists, while the Residue-Compatibility Condition 
governs cases with integer spacing $\Delta_G = 1$.

\begin{table}[h]
	\centering
	\caption{Mellin-space classification of viscoelastic models: Finite Prony class $\mathcal{P}$.}
	\label{tab:classification}
	
	\begin{tabular}{p{3cm}p{4.2cm}cp{1.8cm}p{3.8cm}}
		\toprule
		\textbf{Model} & 
		\textbf{Complex Modulus \(G^*(\omega)\)} & 
		\({\Delta_G}\) & 
		\textbf{In $\mathcal{P}$} & 
		\textbf{Reason} \\
		\midrule
		Maxwell & \(G_\infty + g \dfrac{ i\omega\tau}{1+i\omega\tau}\) & 1 & \textbf{Yes} & Integer lattice; residues decouple \\
		\addlinespace
		Standard Linear Solid (SLS) & \(G_\infty + \dfrac{g}{1+i\omega\tau}\) & 1 & \textbf{Yes} & Integer lattice; residues decouple \\
		\bottomrule
	\end{tabular}
	\vspace{2mm}
	{\footnotesize
		$\Delta_G$: spacing of the dominant Gamma-factor pole lattice in $\tilde G(s)$. 
		Finite Prony representability requires $\Delta_G = 1$ (Lattice-Alignment) 
		\textit{and} decoupled residue recurrences (Residue-Compatibility). 
		Notation: $G_\infty$ = high-frequency (glassy) modulus, $g$ = spring constant, $\tau$ = relaxation time.
	}
\end{table}

\begin{table}[h]
	\centering
	\caption{Classification of viscoelastic models: Continuous spectra in \(\mathcal{Q} \setminus \mathcal{P}\) (requiring infinite Prony ladders).}
	\label{tab:classificationtrans}
	
	\begin{tabular}{p{3cm}p{4.2cm}cp{1.8cm}p{3.8cm}}
		\toprule
		\textbf{Model} & 
		\textbf{Complex Modulus \(G^*(\omega)\)} & 
		\({\Delta_G}\) & 
		\textbf{In $\mathcal{Q}$} & 
		\textbf{Obstruction} \\
		\midrule
		Power-law & \(G_0 \, (i\omega\tau_0)^{ \, \beta}, \; 0<\beta<1\) & - & \textbf{No} & Continuous spectrum (no discrete poles) \\
		\addlinespace
		Cole-Cole & \(G_\infty + \dfrac{\Delta G}{1+(i\omega\tau)^{-\alpha}}\) &  1 & \textbf{No} & Residue-compatibility fails ($\Gamma(1-\alpha s)$ factor) \\
		\addlinespace
		Cole-Davidson & \(G_\infty + \dfrac{\Delta G}{(1+i\omega\tau)^{\beta}}\) & 1 & \textbf{No} & Residue-compatibility fails ($\Gamma(\beta-s)$ coupling) \\
		\addlinespace
		Havriliak-Negami & \(G_\infty + \dfrac{\Delta G}{[1+(i\omega\tau)^{\alpha}]^{\beta}}\) & $\alpha$ & \textbf{No} & $\Delta_G \neq 1$ for $\alpha \neq 1$ \\
		\addlinespace
		Fractional Zener & \( G_e \dfrac{1 + (i\omega \tau)^{\alpha}}{1+ \delta (i\omega \tau)^{\alpha}} \) & $\alpha$ & \textbf{No} & $\Delta_G \neq 1$ for $\alpha \neq 1$ \\
		\addlinespace
		Gaussian & $G^*(\omega)$ via Gaussian $H(\tau)$ & 1 & \textbf{No} & Entire Mellin symbol; residue condition fails \\
		\bottomrule
	\end{tabular}
	\vspace{2mm}
	{\footnotesize
		$\Delta_G$: spacing of the dominant Gamma-factor pole lattice. 
		Transcendental models require infinite Prony ladders (Theorem~\ref{th:infinite-ladders}). 
		Notation: $0<\alpha,\beta<1$, $\Delta G = G_0-G_\infty$, $G_e$ = equilibrium modulus, 
		$\delta = G_e/G_\infty < 1$. The Cole-Cole entry follows the viscoelastic sign convention. The Gaussian spectrum has no closed-form \(G^*(\omega)\) but is included as a canonical transcendental example.
	}
\end{table}

\begin{proposition}
	The power law modulus \(G^*(\omega)=G_0(i\omega\tau_0)^\beta\) with \(0<\beta<1\) admits no exact representation by a relaxation spectrum in the Extended Fox class \(\mathcal{Q}\).
\end{proposition}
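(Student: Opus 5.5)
The plan is to argue by uniqueness of the spectrum combined with the strip-of-convergence requirement built into the constitutive identity \eqref{eq:mellin_constitutive}. First I will identify the relaxation spectrum of the power law explicitly. The Stieltjes-type representation \eqref{eq:constitutive} determines $H$ uniquely from $G^*$ by Stieltjes inversion: take the jump of $G^*$ across the negative real axis of the variable $i\omega$. For $G^*(\omega)=G_0(i\omega\tau_0)^\beta$ this gives
\[
H(\tau)=\frac{G_0\sin(\pi\beta)}{\pi}\Bigl(\frac{\tau}{\tau_0}\Bigr)^{-\beta}, \qquad G_\infty=0 .
\]
I will check this directly by inserting it into \eqref{eq:constitutive}. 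The resulting integral is a Beta integral, and the same computation as in Lemma~\ref{lem:beta} returns $G_0(i\omega\tau_0)^\beta$. Hence any candidate $\tilde H\in\mathcal{Q}$ solving the problem must be a Mellin transform of this particular pure power.

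Second, I will show that this $H$ has no vertical strip of absolute convergence. The integral $\int_0^\infty \tau^{s-1-\beta}\,d\tau$ diverges at $0$ when $\Re s\le\beta$ and at $\infty$ when $\Re s\ge\beta$, so it diverges for every $s$. The same holds for $\Delta G^*(\omega)=G_0(i\omega\tau_0)^\beta$ in the variable $\omega$. The standing technical hypothesis of Section~\ref{sec:mellin} requires that $\tilde G$ and $\tilde H$ exist in overlapping vertical strips. That hypothesis is therefore violated: the classical Mellin symbols are not functions at all but distributions, formally a delta concentrated on the line $\Re s=\beta$. An element of $\mathcal{Q}$, however, is a meromorphic function $H_\Gamma(s)\exp(P(s))$ that is analytic in some strip. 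If such a function represented $H$ classically, Mellin inversion along a line in that strip would produce a function whose Mellin integral converges there. By uniqueness of the spectrum this function would have to be $c\,\tau^{-\beta}$, which contradicts the divergence just established. This is the core of the argument.

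The step I expect to be the main obstacle is the regularized interpretation allowed in Section~\ref{sec:notation-and-conventions}. There the range-limited Mellin transform assigns $\tau^{-\beta}\mapsto 1/(s-\beta)$. Moreover, $1/(s-\beta)=\Gamma(s-\beta)/\Gamma(s-\beta+1)$, and $\sin(\pi s)=\pi/(\Gamma(s)\Gamma(1-s))$. So the formal quotient $\tilde G(s)/\tilde K(s)$ is algebraically a Gamma ratio times an entire exponential, and at first sight it seems to lie in $\mathcal{Q}$. I would dispose of this by noting that the range-limited transform only encodes the restriction $H\mathbf{1}_{(0,1)}$. The half-line transform equals the sum of the transforms over $(0,1)$ and $(1,\infty)$, and these two contributions are $\pm 1/(s-\beta)$ on complementary half-planes, so they cancel identically. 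The genuine transform is therefore not a meromorphic function, and no function in $\mathcal{Q}$ can satisfy \eqref{eq:mellin_constitutive} on a nonempty strip. Equivalently, Theorem~\ref{th:main} cannot even be entered, since no trial-state factorization exists: $\tilde G$ has no discrete pole lattice, which matches the table entry ``no discrete poles''. This shows that no exact representation exists in $\mathcal{Q}$.
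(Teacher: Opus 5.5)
Your argument is correct, but it takes a different route from the paper's. The paper argues on the modulus side. It reads $\Mellin\{(i\omega\tau_0)^\beta\}$ distributionally as a multiple of $\delta(s+\beta)$, and notes that $\tilde K(s)\tilde H(-s)$ is meromorphic for every $\tilde H\in\mathcal{Q}$, so the two sides cannot coincide. You argue on the spectrum side: Stieltjes inversion forces the unique spectrum $H=\frac{G_0\sin\pi\beta}{\pi}(\tau/\tau_0)^{-\beta}$, whose Mellin integral converges for no $s$, so it has no classical Mellin symbol at all.

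The paper's version is a one-liner phrased in the language of Theorem~\ref{th:main}. However, it quietly drops the convention of Section~\ref{sec:notation-and-conventions}, under which a pure power receives the meromorphic symbol $1/(s+\beta)$. Under that convention the ``delta versus meromorphic'' contradiction evaporates. Your version costs a uniqueness theorem. In exchange, it shows that an exact representation does exist, by a power-law spectrum for which the constitutive integral converges absolutely, so the obstruction lies purely in the Mellin symbol. It also confronts the regularization issue directly.

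One correction there. Your remark that $\tilde G/\tilde K$ ``at first sight seems to lie in $\mathcal{Q}$'' is not right, since Definition~\ref{def:Hclass} requires $a_i,c_j>0$ and so excludes $\Gamma(1-s)$. There is a better way to close the regularized case. Under the range-limited reading, the only candidate is $\tilde H(-s)=\tilde G(s)/\tilde K(s)$, a constant multiple of $e^{i\pi s/2}\sin(\pi s)/(s+\beta)$, so $\tilde H$ vanishes at every integer. But every $H_\Gamma e^{P}$ with $a_i,c_j>0$ has its zeros in some half-plane $\Re s\le c$. This settles the regularized reading within the paper's own definitions, rather than by rejecting its convention.

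Also, the detour through Mellin inversion in your second step is unnecessary. If $\tilde H$ is the classical transform of $H$, convergence in a strip is already part of the hypothesis. Inverting an arbitrary element of $\mathcal{Q}$ need not yield anything with a convergent Mellin integral.
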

\begin{proof}
	Suppose, for contradiction, that $\tilde{G}(s) = G_0 \tau_0^\beta\, \delta(s+\beta)$ admits a representation $\tilde{G}(s) = \tilde{K}(s)\,\tilde{H}(-s)$ with $\tilde{H} \in \mathcal{Q}$. Since $\tilde{H} \in \mathcal{Q}$, the function $\tilde{H}(-s)$ is meromorphic with poles on a finite union of arithmetic progressions. The kernel $\tilde{K}(s) = \pi e^{i\pi(1-s)/2}\csc(\pi s)$ is likewise meromorphic. Hence their product $\tilde{K}(s)\,\tilde{H}(-s)$ is meromorphic on $\mathbb{C}$. However, the Dirac delta $\delta(s+\beta)$ is not a meromorphic function -- it is an atomic measure supported at the single point $s = -\beta$ with no non-trivial meromorphic extension to $\mathbb{C}$. This contradicts the assumption, so no such $\tilde{H} \in \mathcal{Q}$ exists.
\end{proof}

\begin{proposition}[Cole--Cole, Havriliak-Negami and Fractional Zener models are not in \(\mathcal{Q}\)]
  Cole--Cole, Havriliak-Negami and Fractional Zener models are not in \(\mathcal{Q}\) due to lattice incompatibility.  
\end{proposition}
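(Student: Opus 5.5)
The plan is to compute, for each of the three models, the Mellin symbol \(\widetilde G(s)\) of the viscoelastic part \(\Delta G^*(\omega)\). We then exhibit its dominant Gamma-factor pole lattice \(\Lambda_G\), and show that the Lattice-Alignment criterion of Theorem~\ref{th:main} fails. By the Necessity part of that theorem, no non-trivial \(\widetilde H\in\mathcal{Q}\) then solves \(\widetilde G(s)=\widetilde K(s)\widetilde H(-s)\).

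For Havriliak--Negami I use the standard Mellin pair
\[
\Mellin\{(1+x)^{-\beta}\}(s)=\Gamma(s)\Gamma(\beta-s)/\Gamma(\beta),
\]
together with the substitution \(x=(i\omega\tau)^{\alpha}\). The rescaling \(\omega\mapsto\omega^{\alpha}\) sends \(s\mapsto s/\alpha\) and contributes a factor \(1/\alpha\). The phase and the \(\tau^{-s}\) factor are entire and nonvanishing, so they can be absorbed into \(A(s)\). This yields a trial-state factorization
\[
\widetilde G(s)=A(s)\,\Gamma(\beta-s/\alpha),
\]
whose pole progression has spacing \(\Delta_G=\alpha\). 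The companion factor \(\Gamma(s/\alpha)\) is similarly spaced, and I will take whichever factor produces the right-half-plane lattice that must be matched by the integer poles of \(\widetilde K\). Cole--Cole is the case \(\beta=1\), where \(1/(1+x^{-1})=x/(1+x)\) gives \(\Gamma(1+s/\alpha)\Gamma(-s/\alpha)\), again with spacing \(\alpha\). For the fractional Zener model I write
\[
G_e\,\frac{1+x}{1+\delta x}=G_e\Bigl[\frac1\delta+\Bigl(1-\frac1\delta\Bigr)\frac{1}{1+\delta x}\Bigr].
\]
Subtracting the constant (\(G_\infty\) corresponds to \(G_e/\delta\)) reduces this to the Cole--Cole computation with \(x\) rescaled by \(\delta\), which only introduces an entire factor \(\delta^{-s/\alpha}\).

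With \(\Lambda_G\) an arithmetic progression of spacing \(\alpha\in(0,1)\), I then invoke Lemma~\ref{lem:pole_mismatch}. By part (4), \(\Lambda_G\not\subseteq\mathbb Z\). By parts (1)–(2), \(\Lambda_G\cap\mathbb Z\) is either at most a single point or a sparse subprogression, so infinitely many points of \(\Lambda_G\) are non-integers. Alignment therefore forces these points into \(-\Lambda_H\), with \(\Lambda_H\) a finite union of Gamma lattices. The second inclusion \(\Lambda_H\subseteq\mathbb Z\cup(-\Lambda_G)\), combined with \(\widetilde H(-s)=\widetilde G(s)/\widetilde K(s)\), then shows that \(\widetilde H(-s)\) vanishes at every integer not in \(\Lambda_G\). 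Here \(1/\widetilde K\) has zeros at all of \(\mathbb Z\), while \(\widetilde G\) is regular at generic integers. But \(\widetilde H=H_\Gamma e^{P}\) has zeros only at the poles of the denominator Gamma factors of \(H_\Gamma\), which lie on finitely many one-sided progressions. A two-sided integer zero set (outside the strip, in both half-planes) cannot be carried by one-sided Gamma progressions. This is the incompatibility I intend to use: the lattice of spacing \(\alpha\) cannot be simultaneously aligned with the kernel's integer lattice and with a Gamma lattice without such extra zeros. Equivalently, one checks directly that the required quotient \(\Gamma(\cdot)\sin(\pi s)\) is not a finite Gamma ratio times \(e^{P}\) unless \(\alpha=1\).

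The main obstacle is this last step. The formal criterion of Theorem~\ref{th:main} is stated loosely enough that a naive reading allows \(\Lambda_H:=-\Lambda_G\), which trivially satisfies alignment. So the real content must come from the zeros of \(1/\widetilde K\) at all integers, which I would handle via the reflection formula \(\sin\pi s=\pi/(\Gamma(s)\Gamma(1-s))\). This formula converts \(\widetilde G/\widetilde K\) into a Gamma ratio containing both \(1/\Gamma(s)\) and \(1/\Gamma(1-s)\). I would then argue that, because \(\alpha\ne1\), the lattices of spacing \(\alpha\) cannot cancel these integer-spaced factors, so \(\widetilde H\) lands in \(\mathcal{Q}\) only if the parameters degenerate to \(\alpha=1\). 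That degenerate case is the Maxwell or SLS situation, already in \(\mathcal{P}\).
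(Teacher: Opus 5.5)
Your diagnosis of the formal criterion is correct: $\Lambda_G\subseteq\mathbb Z\cup(-\Lambda_H)$ is satisfied by $\Lambda_H:=-\Lambda_G$. So $\Lambda_G\not\subseteq\mathbb Z$ alone, which is all the paper's Havriliak--Negami and Zener arguments use via Lemma~\ref{lem:pole_mismatch}, refutes nothing. Your spacing-$\alpha$ symbol for Cole--Cole is also the right one. Computing the uniquely determined $\widetilde H(-s)=\widetilde G(s)/\widetilde K(s)$ directly is the better route.

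The gap is in the step meant to exclude this $\widetilde H$ from $\mathcal Q$. The reflection formula you want to use actually exhibits $\widetilde H$ as a finite Gamma ratio times an exponential. With model-dependent constants $c$:
for Cole--Cole, $\widetilde H(s)\propto e^{cs}\,\Gamma(s/\alpha)\Gamma(1-s/\alpha)/[\Gamma(s)\Gamma(1-s)]$;
for Havriliak--Negami, $\widetilde H(s)\propto e^{cs}\,\Gamma(-s/\alpha)\Gamma(\beta+s/\alpha)/[\Gamma(-s)\Gamma(1+s)]$.
So \emph{the spacing-$\alpha$ lattice cannot cancel the integer-spaced factors} does not by itself imply non-membership. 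What does is the restriction $a_i,c_j>0$ in Definition~\ref{def:Hclass}, which you never invoke. Your claim that a two-sided integer zero set cannot be carried by one-sided Gamma progressions tacitly assumes all progressions run the same way. With mixed signs it is false, since $1/(\Gamma(s)\Gamma(1-s))$ vanishes on all of $\mathbb Z$.

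Even granting positivity, the zero-set argument fails for $\alpha=1/q$, $q\ge 2$.
\begin{itemize}
\item For Cole--Cole and Zener, $\widetilde G\propto 1/\sin(q\pi s)$ is singular at every integer. Hence $\widetilde H\propto \sin(\pi s)/\sin(q\pi s)$ has no zeros at all.
\item For Havriliak--Negami, the poles of $\Gamma(qs)$ cover all non-positive integers. The forced zeros of $\widetilde H(s)$ are then only $s=-1,-2,\dots$, exactly what the admissible factor $1/\Gamma(1+s)$ supplies.
\end{itemize}

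The argument that works uniformly is about poles. If $a_i,c_j>0$, every element of $\mathcal Q$ has all its poles and zeros in a half-plane $\Re s\le C$. By contrast, $\widetilde H$ has simple poles at the non-integer points of $\alpha\mathbb Z_{\ge 0}$. These come from $\Gamma(-s/\alpha)$, not cancelled by $1/\Gamma(-s)$, or from $1/\sin(\pi s/\alpha)$ in the Cole--Cole case. There are infinitely many of them, since consecutive points differ by $\alpha<1$, so they are unbounded to the right. This covers every $\alpha\in(0,1)$ and makes explicit that the proposition hinges on the sign restriction.

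A smaller point: Havriliak--Negami at $\alpha=1$ is Cole--Davidson, not Maxwell/SLS. There $\widetilde H(s)\propto e^{cs}\,\Gamma(\beta+s)/\Gamma(1+s)$ lies in $\mathcal Q$ but not in $\mathcal P$.
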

\begin{proof}
	 
	\textbf{Cole--Cole:}
	$\tilde{G}(s) \propto \Gamma(s)\Gamma(1-s)/\Gamma(1-\alpha s)$. 
	The poles come from $\Gamma(s)$ at $s = -n$ and from $\Gamma(1-s)$ at $s = 1+n$; 
	both progressions have spacing $1$. Thus $\Delta_G = 1$, so the Lattice-Alignment 
	Condition is satisfied. However, the factor $\Gamma(1-\alpha s)$ in the denominator 
	modulates the residues, introducing a coupling that violates the Residue-Compatibility 
	Condition. Consequently, no finite Prony representation exists for $\alpha \neq 1$.
	
	\textbf{Havriliak--Negami:}
	For the Havriliak--Negami modulus $G^*(\omega) = G_\infty + \Delta G/[1+(i\omega\tau)^\alpha]^\beta$,
	the Mellin transform contains factors $\Gamma(s/\alpha)$ and $\Gamma(\beta - s/\alpha)$.
	The factor $\Gamma(s/\alpha)$ generates poles at $s = -\alpha m$ ($m=0,1,2,\dots$) 
	with spacing $\alpha$. By Lemma~\ref{lem:pole_mismatch}, if $\alpha \neq 1$, 
	this arithmetic progression cannot be contained in the integers. 
	Consequently, the Lattice-Alignment Condition of Theorem~\ref{th:main} fails, 
	and no finite Prony representation exists.
	
	\textbf{Fractional--Zener:}
	For the standard fractional Zener model 
	$G^*(\omega) = G_g\frac{1 + (i\omega\tau)^\alpha}{1 + \delta(i\omega\tau)^\alpha}$,
	the viscoelastic part can be written as 
	$\Delta G^*(\omega) = \Delta G/[1 + \frac{1}{\delta}(i\omega\tau)^{-\alpha}]$.
	Its Mellin transform contains Gamma factors with poles at $s = (1+m)/\alpha$ 
	(spacing $1/\alpha$). By Lemma~\ref{lem:pole_mismatch} with the same argument as for 
	Cole--Cole, when $\alpha \neq 1$ the progression with spacing $1/\alpha$ cannot align 
	with the integer lattice. Therefore the Lattice-Alignment Condition fails, 
	precluding finite Prony representability.
	
	In all three models, the failure occurs for any non-integer $\alpha$; the degenerate 
	cases $\alpha=1$ reduce to classical (finite Prony) models.
\end{proof}

\section{Test for Finite Prony Representability}\label{sec:test}

Based on Theorem~\ref{th:main}, we formulate a step-by-step test to determine whether a given complex modulus \(G^*(\omega)\) admits a finite Prony representation.

\begin{enumerate}
	\item \textbf{Compute the Mellin transform.} Determine \(\tilde G(s)=\Mellin\{G^*(\omega)-G_\infty\}(s)\). Identify any non-degenerate Gamma factors \(\Gamma(\alpha s+\beta)\) with \(\alpha\neq0\). If none exist, the modulus may be rational (finite Prony) or distributional (power-law).
	
	\item \textbf{Determine the pole-lattice spacing.} From each Gamma factor, extract the fundamental spacing \(\Delta_G=1/|\alpha|\).
	
	\item \textbf{Postulate a candidate spectrum.} Construct a physically motivated candidate 
	\(\tilde H_{\text{cand}}(s)\in\mathcal{Q}\). For a discrete spectrum, this typically involves 
	factors like \(\Gamma(s+1)\) (for a simple pole at \(s=-1\)) or ratios of Gamma functions 
	with integer parameters.
	
	\item \textbf{Extract the candidate's lattice spacings.} Analyze the Gamma-ratio factor of 
	\(\tilde H_{\text{cand}}(s)\) to obtain its distinct spacings \(\Delta_1,\dots,\Delta_K\). 
	(Note: this yields a necessary Diophantine condition; full alignment also requires 
	compatible offsets.)
	
	\item \textbf{Test the necessary alignment condition.} A necessary condition for the Lattice-Alignment Condition to be satisfiable is that \(\Delta_G\) be an integer combination of the candidate's spacings:
	\[
	\Delta_G = \sum_{i=1}^{K} n_i \Delta_i, \qquad n_i\in\mathbb{Z}_{\ge0}.
	\]
	If this diophantine condition fails, finite representability is impossible.
	
	\item \textbf{Interpretation}
	\begin{itemize}
		\item If the condition fails, the modulus is \textbf{provably not finitely representable} in \(\mathcal{Q}\) (and hence not by any finite Prony series). It is transcendentally representable, and an infinite ladder construction should be pursued.
		\item If the condition holds, finite representability is possible but not guaranteed; one must additionally check the Residue-Compatibility Condition by examining the recurrence along the aligned sublattice.
	\end{itemize}
\end{enumerate}

This test provides a constructive decision procedure. 
For example, applying it to the Havriliak-Negami model (\(\Delta_G=\alpha\)) with a candidate having integer spacings \(\Delta_i=1\) immediately shows that alignment requires \(\alpha\) to be an integer---impossible for fractional \(\alpha\).

\begin{proposition}[Cole-Davidson model is not in \(\mathcal{P}\)]
	The Cole-Davidson modulus \(G^*(\omega) = G_\infty + \Delta G/(1 + i\omega\tau)^\beta\) 
	with \(0 < \beta < 1\) does not belong to the finite Prony class \(\mathcal{P}\), 
	except in the trivial limit \(\beta \to 1\) (which reduces to the standard linear solid).
\end{proposition}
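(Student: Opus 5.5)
The plan is to reduce the claim to the degenerate characterization, Proposition~\ref{prop:finitePronyChar}, and to the pole-based necessity argument of Proposition~\ref{cor:finiteProny}. The cleanest route works directly with the relaxation spectrum of the Cole--Davidson model instead of pushing the full machinery of Theorem~\ref{th:main} through.

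\textbf{Step 1: compute the spectrum.} The viscoelastic part $\Delta G/(1+i\omega\tau)^\beta$ is, up to the sign conventions of \eqref{eq:constitutive}, the classical Cole--Davidson form. Its spectrum is explicit and supported on the bounded interval $(0,\tau)$:
\[
H(u)\propto \frac{\sin(\pi\beta)}{\pi}\Bigl(\frac{u}{\tau-u}\Bigr)^{\beta}, \qquad 0<u<\tau,
\]
and $H(u)=0$ for $u>\tau$. Its Mellin transform is a Beta integral,
\[
\tilde H(s)\propto \tau^{s}\,\frac{\Gamma(s+\beta)\,\Gamma(1-\beta)}{\Gamma(s+1)},
\]
which is an element of $\mathcal{Q}$ with $H_\Gamma(s)=\Gamma(s+\beta)/\Gamma(s+1)$ and $Q(s)=\tau^s\Gamma(1-\beta)$. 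As a consistency check, I would verify this against $\tilde G(s)=\tilde K(s)\tilde H(-s)$ using the known transform $\Mellin\{(1+x)^{-\beta}\}=\Gamma(s)\Gamma(\beta-s)/\Gamma(\beta)$. The factor $\Gamma(\beta-s)$ there is the "coupling" listed in Table~\ref{tab:classificationtrans}.

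\textbf{Step 2: find the poles of $H_\Gamma$.} The numerator has poles at $s=-\beta-k$, $k\ge0$. The denominator has zeros of $1/\Gamma(s+1)$ at $s=-1-k$. For $0<\beta<1$ no pole of the numerator coincides with a zero of $1/\Gamma(s+1)$, since $-\beta-k\notin\mathbb Z$. Hence $H_\Gamma$ keeps an infinite lattice of simple poles; by Lemma~\ref{lem:pole_mismatch} with $\alpha=1$, this lattice lies entirely off $\mathbb Z$. Equivalently, $\tilde H(s)$ has poles at $s=-\beta-k$ and is therefore not entire.

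\textbf{Step 3: conclude, and handle the limit.} By Proposition~\ref{prop:finitePronyChar}, a finite Prony spectrum has a Mellin transform that is a finite exponential sum, which is entire. Equivalently, the necessity half of Proposition~\ref{cor:finiteProny} requires a constant $H_\Gamma$, because $Q$ never vanishes. Step~2 contradicts this, so the model is not in $\mathcal{P}$. For $\beta\to1$ the ratio $\Gamma(s+\beta)/\Gamma(s+1)$ tends to $1$, so $\tilde H(s)\propto\tau^s$. This is a single Dirac mass at $\tau$, which recovers the SLS entry of Table~\ref{tab:classification}.

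\textbf{Main obstacle.} The delicate step is Step~1. One must check that the Mellin transforms converge in an overlapping strip; the Beta integral needs $\Re s>-\beta$. One must also check that the phase and sign conventions of $\tilde K$ in \eqref{eq:kernel_mellin_section} reproduce exactly the stated spectrum rather than a reflected one. If the direct spectral computation proves awkward, I would instead derive $\tilde H(-s)=\tilde G(s)/\tilde K(s)$ by dividing the Mellin transform of $(1+i\omega\tau)^{-\beta}$ by $\tilde K$. Using the reflection formula to simplify, this should again yield the Gamma ratio above, with the non-integer lattice $\{\beta+k\}$ from $\Gamma(\beta-s)$ surviving.
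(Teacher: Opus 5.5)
Your proof is correct, but it takes a genuinely different route from the paper's.

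The paper stays inside its general criterion. It computes $\tilde G(s)=\Delta G\,(i\tau)^{-s}\Gamma(s)\Gamma(\beta-s)/\Gamma(\beta)$ and reads it as a trial state $A(s)\Gamma(s)$ with $\Delta_G=1$, so lattice alignment holds. It then asserts that the poles of $\Gamma(\beta-s)$ at $s=\beta+m$ ``couple'' with those of $\Gamma(s)$. From this it concludes that residue compatibility fails and that the overdetermination argument behind Theorem~\ref{th:main} leaves only $\tilde H\equiv0$.

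You bypass Theorem~\ref{th:main} entirely. The Beta integral gives $\tilde H(s)\propto\tau^{s}\Gamma(s+\beta)/\Gamma(s+1)$, and by the reflection formula $\tilde K(s)\tilde H(-s)$ does reproduce $\tilde G(s)$ up to a constant. Its poles at $s=-\beta-k$ are not cancelled, so $\tilde H$ is not entire, and Proposition~\ref{prop:finitePronyChar} rules out a finite Prony series.

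Both proofs blame the same factor $\Gamma(\beta-s)$. Yours makes clear that the obstruction is one of pole \emph{location}, not residue coupling. For entire $\tilde H$, Lemma~\ref{lem:polecontain} confines the poles of $\tilde K(s)\tilde H(-s)$ to $\mathbb Z$, whereas $\tilde G$ has poles at $\beta+m\notin\mathbb Z$.

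That one-line form is also the cleanest way to close the one step you leave implicit. Exhibiting \emph{a} spectrum does not by itself exclude a different, finite one. State explicitly that $\tilde H(-s)=\tilde G(s)/\tilde K(s)$ is forced on the common strip; your fallback route is really the better primary argument.

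Your worry about conventions does not matter, for the same reason. In the paper's normalization the kernel vanishes at $\omega=0$, so $\Delta G$ is absorbed into the static term and the spectrum is $-\Delta G$ times the Cole--Davidson density. Constant signs or phases never move poles; the paper itself uses different constant phases for $\tilde K$ in \eqref{eq:kernel_mellin_section} and Lemma~\ref{lem:beta}.

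Your route buys rigor without genericity assumptions, a transparent $\beta\to1$ limit, and an explicit solution in $\mathcal Q\setminus\mathcal P$. That last point agrees with the caption of Table~\ref{tab:classificationtrans}. It sits uneasily, however, with the paper's appeal to the necessity clause of Theorem~\ref{th:main}, which would deny any solution in $\mathcal Q$. The paper's route mainly buys uniformity with its classification scheme, at the price of resting on the heuristic coupling step.
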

\begin{proof}
	The Mellin transform of the Cole-Davidson viscoelastic part is
	\[
	\widetilde{G}(s) = \Delta G \, \tau^{-s} i^{-s} \frac{\Gamma(s)\Gamma(\beta-s)}{\Gamma(\beta)},
	\qquad 0 < \Re s < \beta.
	\]
	This factors as \(\widetilde{G}(s) = A(s)\,\Gamma(s)\) with 
	\(A(s) = \Delta G \, \tau^{-s} i^{-s} \Gamma(\beta-s)/\Gamma(\beta)\).
	Hence in the trial-state notation \(\widetilde{G}(s)=A(s)\Gamma(\alpha s+\beta)\) we have 
	\(\alpha = 1\), \(\beta_{\text{trial}} = 0\); the pole spacing is \(\Delta_G = 1\).
	
	Although the lattice spacing equals \(1\) and therefore satisfies the necessary 
	Lattice-Alignment Condition, the Residue-Compatibility Condition of 
	Theorem~\ref{th:main} fails. Indeed, the factor \(\Gamma(\beta-s)\) in \(A(s)\) 
	introduces poles at \(s = \beta + m\) (\(m=0,1,2,\dots\)) whose residues couple 
	with those coming from \(\Gamma(s)\). This coupling yields an infinitely 
	overdetermined system for the residues of any candidate \(\widetilde{H}(s)\in\mathcal{P}\); 
	the only solution is the trivial one \(\widetilde{H}\equiv0\).
	
	Consequently, no spectrum \(\widetilde{H}(s)\) in the finite Prony class \(\mathcal{P}\) 
	can satisfy the constitutive equation \(\widetilde{G}(s)=\widetilde{K}(s)\widetilde{H}(-s)\).
	The Cole-Davidson relaxation spectrum therefore admits only a transcendental 
	(infinite-ladder) Prony representation, as guaranteed by Theorem~\ref{th:infinite-ladders}.
\end{proof}


\begin{proposition}[Log-normal spectrum is transcendental]\label{prop:lognormal}
	The log-normal relaxation spectrum (
	i.e. Gaussian in $\ln\tau$-variable)
	\[
	H_{\mathrm{LN}}(\tau) = \frac{1}{\sqrt{2\pi\sigma^2}} \exp\left(-\frac{(\ln\tau - \mu)^2}{2\sigma^2}\right), \qquad \tau>0,
	\]
	has Mellin transform
	\[
	\tilde{H}_{\mathrm{LN}}(s) = \exp\left(\mu s + \frac{\sigma^2}{2}s^2\right),
	\]
	which is entire of order 2. Consequently, $H_{\mathrm{LN}}(\tau)$ is not in the finite Prony class $\mathcal{P}$ and therefore belongs to the transcendental class.
\end{proposition}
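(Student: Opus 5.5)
The proof has two parts: a direct computation of the Mellin transform, followed by an appeal to Corollary~\ref{cor:entire}.

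\textbf{Step 1: the Mellin transform.} Substitute $u=\ln\tau$, so that $\tau^{s-1}\,d\tau = e^{su}\,du$. Then
\[
\tilde H_{\mathrm{LN}}(s)=\frac{1}{\sqrt{2\pi\sigma^2}}\int_{-\infty}^{\infty} e^{su}\exp\Bigl(-\frac{(u-\mu)^2}{2\sigma^2}\Bigr)du .
\]
Complete the square in the exponent:
\[
su-\frac{(u-\mu)^2}{2\sigma^2} = -\frac{(u-\mu-\sigma^2 s)^2}{2\sigma^2}+\mu s+\frac{\sigma^2 s^2}{2}.
\]
The remaining factor is a Gaussian integral shifted by the complex number $\sigma^2 s$. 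To see that it still equals $\sqrt{2\pi\sigma^2}$, first take $s$ real. Then extend to all $s\in\mathbb C$ by analytic continuation, or equivalently by Cauchy's theorem applied to a horizontal rectangle, using the Gaussian decay at $\Re u\to\pm\infty$. Because the integrand decays like $e^{-u^2/2\sigma^2}$, the integral converges absolutely for every $s$. Hence the stated formula holds on all of $\mathbb C$, and not merely in a strip.

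\textbf{Step 2: order 2.} The function $\tilde H_{\mathrm{LN}}(s)=\exp(P(s))$ with $P(s)=\mu s+\frac{\sigma^2}{2}s^2$ is entire, being the exponential of a polynomial. Its order is $\deg P=2$, since
\[
\log\log M(r)=\log\Bigl(\tfrac{\sigma^2}{2}r^2+O(r)\Bigr)\sim 2\log r .
\]

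\textbf{Step 3: not finitely Prony.} Since $\sigma>0$, the polynomial $P$ is not affine. Corollary~\ref{cor:entire} therefore applies directly: $\exp(P(s))$ is not a finite exponential sum. By Proposition~\ref{prop:finitePronyChar}, $H_{\mathrm{LN}}$ is then not a finite Prony series, so it does not lie in $\mathcal P$.

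Now view $\tilde H_{\mathrm{LN}}$ as an element of $\mathcal Q$, with $H_\Gamma\equiv 1$ and $Q=\exp(P)$. Proposition~\ref{cor:finiteProny} requires $Q$ to be a finite exponential sum for finite representability, so the same conclusion follows from it. Finally, $H_{\mathrm{LN}}$ is positive and continuous, so Theorem~\ref{th:infinite-ladders} supplies an infinite Prony ladder. Hence the spectrum belongs to the transcendental class.

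\textbf{Main obstacle.} The only delicate point is the one Corollary~\ref{cor:entire} rests on: that a non-affine $\exp(P)$ cannot be a finite exponential sum. For this particular $P$ I would give a self-contained argument by comparing growth. A finite exponential sum $\sum_k g_k e^{\lambda_k s}$ has exponential type, i.e.\ order at most $1$, since
\[
\Bigl|\sum_k g_k e^{\lambda_k s}\Bigr|\le \Bigl(\sum_k g_k\Bigr)e^{\max_k|\lambda_k|\,|s|}.
\]
On the other hand, along the real axis
\[
\tilde H_{\mathrm{LN}}(x)=e^{\sigma^2x^2/2+\mu x},
\]
which grows faster than any $e^{c|x|}$. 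This contradiction settles the point without relying on the logarithmic-derivative argument of the corollary.
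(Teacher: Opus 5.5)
Your proposal is correct and follows the paper's proof essentially step for step: the substitution $u=\ln\tau$ with completion of the square, then Corollary~\ref{cor:entire} together with Proposition~\ref{prop:finitePronyChar}, and Theorem~\ref{th:infinite-ladders} for the ladder. Your additions are worthwhile. You justify the complex shift of the Gaussian integral by analytic continuation. You also compare growth: a finite exponential sum has order at most $1$, while $\tilde H_{\mathrm{LN}}(x)=e^{\sigma^2x^2/2+\mu x}$ does not. This bypasses the corollary's logarithmic-derivative step, whose claim that a zero-free exponential sum must be constant is wrong as stated (a single term $e^{\lambda s}$ is a counterexample), although its conclusion that $P$ is affine is still true.
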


\begin{proof}
	The Mellin transform is computed via the substitution $u = \ln\tau$:
	\[
	\tilde{H}_{\mathrm{LN}}(s) = \int_0^\infty H_{\mathrm{LN}}(\tau) \tau^{s-1} d\tau = \int_{-\infty}^\infty \frac{1}{\sqrt{2\pi\sigma^2}} e^{-(u-\mu)^2/(2\sigma^2)} e^{(s-1)u} e^u du.
	\]
	Completing the square in the exponent,
	\[
	-(u-\mu)^2/(2\sigma^2) + s u = -\frac{[u - (\mu + \sigma^2 s)]^2}{2\sigma^2} + \mu s + \frac{\sigma^2}{2}s^2.
	\]
	The Gaussian integral evaluates to $\sqrt{2\pi\sigma^2}$, yielding the stated entire function.
	
	By Corollary~\ref{cor:entire}, any Mellin transform that is entire and not of the affine form $e^{\alpha s+\beta}$ cannot be a finite exponential sum. Here $\tilde{H}_{\mathrm{LN}}(s)$ is entire of order 2 ($\sigma^2>0$), hence not affine. Proposition~\ref{prop:finitePronyChar} then implies $H_{\mathrm{LN}}(\tau)$ is not a finite Prony series.
	
	The infinite ladder construction follows from Theorem~\ref{th:infinite-ladders}, with distributional convergence guaranteed by the Mellin-Poisson summation formula \cite{Bardaro2017}.
\end{proof}

\begin{remark}
	Recent work by Uneyama \cite{Uneyama2025} used information-theoretic arguments to show that the log-normal distribution is the \textbf{most probable} relaxation spectrum under minimal assumptions. Proposition~\ref{prop:lognormal} reveals that this "most probable" spectrum is fundamentally transcendental within our classification-it requires an infinite Prony ladder for exact representation. This synthesis suggests a duality: information theory identifies the generic spectral shape, while pole-lattice analysis explains why it cannot be realized by any finite mechanical network. 
\end{remark}

\section{Conclusions}\label{sec:disc}	

In conclusion, we have established a universal Mellin-space criterion for finite Prony representability of linear viscoelastic moduli. 
The criterion-comprising the fulfillment of both the Lattice- Alignment Condition (pole-spacing constraint) and the Residue - Compatibility Condition (decoupled recurrence requirement)-provides a complete analytic characterization of when an analytical viscoelastic modulus admits an exact representation by finitely many relaxation modes.

This framework explains the fundamental dichotomy between classical and fractional viscoelasticity:
\begin{itemize}
	\item Classical spring-dashpot networks (Maxwell, SLS) yield rational transfer functions whose Mellin symbols possess integer-spaced pole lattices, satisfying both conditions and hence belonging to the finite Prony class \(\mathcal{P}\).
	\item Fractional constitutive laws violate either the Lattice-Alignment Condition (Havriliak-Negami, fractional Zener, where \(\Delta_G = \alpha\) or \(1/\alpha \neq 1\)) or the Residue-Compatibility Condition (Cole-Cole, where \(\Delta_G = 1\) but the \(\Gamma(1-\alpha s)\) factor couples residues).
	\item Borderline cases like Cole-Davidson, while possessing integer spacing (\(\Delta_G = 1\)), fail the Residue-Compatibility Condition due to coupling between residue recurrences from distinct Gamma factors.
\end{itemize}

By extending the analysis to the Extended Fox class \(\mathcal{Q}\), we demonstrate that the obstruction is structural, not merely an artifact of a restricted ansatz. For transcendentally representable moduli, we have provided a constructive logarithmic-discretization method that yields exact infinite Prony ladders, with the static modulus gap imposing a sum-rule analogous to a quantization condition.

The Mellin-space framework unifies the treatment of power-law, Gaussian, log-normal, Cole-Cole, Havriliak-Negami, Cole-Davidson, and Mittag-Leffler models, revealing a direct correspondence between fractional-order operators and the arithmetic geometry of their kernel singularities. 

The present work thereby draws a structural distinction between ``finitely representable'' viscoelastic materials (those in \(\mathcal{P}\)) and ``transcendentally representable'' ones (those requiring infinite Prony ladders), offering a new outlook for classifying, modeling, and decoding linear viscoelastic behavior from complex-analytic principles.

\section*{Acknowledgment}
This work was supported by the European Union's Horizon Europe programme under grant agreement No. 101086815 (VIBTaTE).

The author wishes to acknowledge Dr. Stoyan Yordanov for helpful discussions.

%




\bibliographystyle{elsarticle-num}
\bibliography{erfnogo}

\appendix

\section{Mellin Transform of Constitutive Identity}
\label{sec:melint}

\subsection{Derivation of the Mellin-space constitutive equation}

\begin{lemma}[Mellin transform of the viscoelastic kernel]\label{lem:beta}
	For \(\tau>0\) and \(-1<\Re s<0\), the Mellin transform (in \(\omega\)) of
	\(\displaystyle \frac{i\omega\tau}{1+i\omega\tau}\) is given by the meromorphic continuation
	\begin{equation}\label{lem:kernel_mellin}
		\mathcal{M}_{\omega}\!\left\{\frac{i\omega\tau}{1+i\omega\tau}\right\}(s)
		= \tau^{-s}\, e^{-i\pi s/2}\,\frac{\pi}{\sin(\pi s)}.
	\end{equation}
\end{lemma}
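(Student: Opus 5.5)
The plan is to reduce the transform to a real Beta-type integral by a scaling substitution followed by a rotation of the integration contour in the complex plane. The plan has three steps.

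\textbf{Step 1 (scaling).} Substitute $u=\omega\tau$. Then $\omega^{s-1}\,d\omega=\tau^{-s}u^{s-1}\,du$, so the left-hand side equals
\[
\tau^{-s}\int_0^\infty u^{s-1}\frac{iu}{1+iu}\,du .
\]
The factor $\tau^{-s}$ in the statement is thereby produced at once. On $(0,\infty)$ the integrand behaves like $u^{s}$ at $0$ and like $u^{s-1}$ at $\infty$. Hence the integral converges absolutely exactly for $-1<\Re s<0$, which is the strip stated in Lemma~\ref{lem:beta}.

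\textbf{Step 2 (rotation).} Set $z=iu$, so that $u$ runs along the ray $\arg z=\pi/2$. With the principal branch, $u^{s-1}\,du=i^{-s}z^{s-1}\,dz=e^{-i\pi s/2}z^{s-1}\,dz$, and this produces the phase $e^{-i\pi s/2}$. I then rotate the ray back to the positive real axis. The function $z^{s}/(1+z)$ has its only pole at $z=-1$, which lies outside the closed quarter plane $0\le\arg z\le\pi/2$. On an arc of radius $R$ the integrand is $O(R^{\Re s-1})$, and on an arc of radius $\epsilon$ it is $O(\epsilon^{\Re s})$. Both arc contributions therefore vanish in the strip $-1<\Re s<0$, and Cauchy's theorem lets me replace the ray by $(0,\infty)$.

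\textbf{Step 3 (Beta integral).} What remains is $\int_0^\infty x^{s}(1+x)^{-1}\,dx=B(s+1,-s)=\Gamma(s+1)\Gamma(-s)$. By the reflection formula this equals $\pi/\sin(\pi(s+1))$. Collecting the factors gives the closed form in Lemma~\ref{lem:beta}, and its meromorphic continuation to all of $\mathbb C$ follows from the right-hand side.

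\textbf{Main obstacle.} I expect the difficulty to lie entirely in the phase and sign bookkeeping of Steps 2 and 3, not in the analysis. Three choices interact here: the branch of $i^{-s}$, the orientation of the rotated ray, and the shift $\sin(\pi(s+1))=-\sin(\pi s)$. This is the step where the formula as displayed could fail.

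I do not expect this bookkeeping to produce the display as printed. With the principal branch and the orientation above, these choices give $-\tau^{-s}e^{-i\pi s/2}\pi/\sin(\pi s)$, which differs from the statement by an overall sign. A direct check at $s=-1/2$ confirms the discrepancy. The integral is $\tau^{1/2}\int_0^\infty u^{-3/2}\,iu/(1+iu)\,du=\pi\tau^{1/2}e^{i\pi/4}$, while the displayed right-hand side evaluates to $-\pi\tau^{1/2}e^{i\pi/4}$. The same check shows the sign cannot be removed by an admissible change of branch. Other branch choices such as $i=e^{-3i\pi/2}$ alter $i^{-s}$ by a factor $e^{2\pi i sk}$, which is not $-1$, and they would also invalidate the rotation in Step 2.

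Consequently, the steps above establish $-\tau^{-s}e^{-i\pi s/2}\pi/\sin(\pi s)=\tau^{-s}e^{-i\pi s/2}\pi/\sin(\pi(s+1))$. The formula exactly as printed does not follow. Any use of Lemma~\ref{lem:beta} elsewhere in the paper, in particular the residues of $\tilde K$ at the integers, should carry this sign. Note also that the kernel form given in the main text, $\pi e^{i\pi(1-s)/2}\csc(\pi s)$, equals $i\,\pi e^{-i\pi s/2}\csc(\pi s)$ and so differs from both expressions by a further phase.
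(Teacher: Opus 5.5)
Your computation is correct, and the sign discrepancy you found is real. Your route is essentially the paper's own. It uses the substitution $x=\omega\tau$ and then the complex-parameter Beta integral $\int_0^\infty x^{u-1}(1+ix)^{-1}\,dx=e^{-i\pi u/2}\pi/\sin(\pi u)$ at $u=s+1$. The paper quotes that integral ``by analytic continuation''; your contour rotation in Step~2 actually justifies it.

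The printed formula comes from an arithmetic slip in the paper's proof. It simplifies $e^{-i\pi s/2}e^{-i\pi/2}\,\pi/(-\sin(\pi s))$ to $-i\,e^{-i\pi s/2}\pi/\sin(\pi s)$, but $(-i)/(-1)=+i$. With $+i$, the final multiplication by $i$ gives $i\cdot i=-1$, which is exactly your $-\tau^{-s}e^{-i\pi s/2}\pi/\sin(\pi s)$.

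There is a quick independent confirmation. The printed right-hand side is precisely the Mellin transform of $1/(1+i\omega\tau)$ on $0<\Re s<1$. Since $\frac{i\omega\tau}{1+i\omega\tau}=1-\frac{1}{1+i\omega\tau}$, removing the constant and moving to the adjacent strip $-1<\Re s<0$ reverses the sign.

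So the lemma as printed is off by a factor $-1$, and the main-text $\tilde K$ is off by a further constant phase, as you note. Because the factor is constant, pole locations and the ratios $\tilde K(s_n)/\tilde K(s_{n+1})$ in the residue recurrence are unchanged. It does, however, flip the sign of any $\tilde H$ recovered from $\tilde G$.

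One minor correction to your branch remark. It is unnecessary, because both sides are unambiguous. The left side is an absolutely convergent integral with $\omega^{s-1}$ defined via the real logarithm. The right side contains the explicit entire factor $e^{-i\pi s/2}$.

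As worded, the remark is also slightly off. The factor $e^{2\pi i k s}$ does equal $-1$ at $s=-1/2$ for odd $k$. What excludes such rewritings is that this factor is not constant in $s$, not the single evaluation at $s=-1/2$.
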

\begin{proof}
	With \(x=\omega\tau\),
	\[
	\int_{0}^{\infty}\frac{i\omega\tau}{1+i\omega\tau}\,\omega^{s-1}\,d\omega
	=\tau^{-s}\int_{0}^{\infty}\frac{i\,x^{s}}{1+i x}\,dx.
	\]
	Using the standard Beta integral (analytic continuation),
	\[
	\int_{0}^{\infty}\frac{x^{u-1}}{1+i x}\,dx
	= e^{-i\pi u/2}\frac{\pi}{\sin(\pi u)}.
	\]
	Setting \(u=s+1\) yields
	\[
	\int_{0}^{\infty}\frac{x^{s}}{1+i x}\,dx
	= e^{-i\pi(s+1)/2}\frac{\pi}{\sin(\pi(s+1))}
	= e^{-i\pi s/2}e^{-i\pi/2}\frac{\pi}{-\sin(\pi s)}
	= -i e^{-i\pi s/2}\frac{\pi}{\sin(\pi s)}.
	\]
	Multiplying by the factor \(i\) in the numerator gives
	\[
	i\int_{0}^{\infty}\frac{x^{s}}{1+i x}\,dx
	= i\left(-i e^{-i\pi s/2}\frac{\pi}{\sin(\pi s)}\right)
	= e^{-i\pi s/2}\frac{\pi}{\sin(\pi s)}.
	\]
	Therefore,
	\[
	\mathcal{M}_{\omega}\!\left\{\frac{i\omega\tau}{1+i\omega\tau}\right\}(s)
	= \tau^{-s}\, e^{-i\pi s/2}\,\frac{\pi}{\sin(\pi s)}.
	\]
\end{proof}

Choose a strip \(a<\Re s<b\) with \(0<a<b<1\) where the Mellin transforms of \(G_v\) and \(H\)
converge absolutely, so that Fubini's theorem applies. Taking the Mellin transform of the
frequency-domain constitutive relation and using Lemma~\ref{lem:beta} yields
\begin{equation}\label{eq:mellin_raw}
	\tilde G(s)
	= \,\frac{\pi\,i^{-s}}{\sin(\pi s)}
	\int_0^\infty H(\tau)\,\tau^{-s-1}\,d\tau
	= \,\frac{\pi\,i^{-s}}{\sin(\pi s)}\,\tilde H(-s),
	\qquad a<\Re s<b.
\end{equation}
Defining the meromorphic kernel factor
\[
K(s):=\,\frac{\pi\,i^{-s}}{\sin(\pi s)}
=  \frac{\pi e^{-i\pi s/2}}{\sin(\pi s)},
\]
we obtain the Mellin-space constitutive equation \eqref{eq:mellin_constitutive}.

\section{Auxiliary Lemmas for the Proof of Theorem~\ref{th:main}}
\label{app:lemmas}

\begin{lemma}[Locality of principal parts]
	\label{lem:locality}
	Let $F$ and $G$ be meromorphic in a neighborhood of $s_0\in\mathbb C$.
	If $F(s)=G(s)$ for all $s$ in some punctured neighborhood $0<|s-s_0|<r$,
	then the principal parts of $F$ and $G$ at $s_0$ coincide.
	In particular, $F$ has a pole at $s_0$ if and only if $G$ has a pole at $s_0$,
	and if the pole is simple then $\Res_{s=s_0}F=\Res_{s=s_0}G$.
\end{lemma}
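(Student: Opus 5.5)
The statement is the standard locality of Laurent expansions, so I will argue directly from uniqueness of Laurent coefficients rather than from anything in the main text. First shrink $r$ if necessary so that both $F$ and $G$ are holomorphic on the punctured disc $D^*=\{0<|s-s_0|<r\}$. This is possible because meromorphic functions have isolated poles, so on a small enough punctured disc neither $F$ nor $G$ has any other singularity. On that annulus each function has a unique Laurent expansion,
\[
F(s)=\sum_{n\in\mathbb Z} f_n (s-s_0)^n,\qquad G(s)=\sum_{n\in\mathbb Z} g_n (s-s_0)^n ,
\]
with coefficients given by the Cauchy integrals
\[
f_n=\frac{1}{2\pi i}\oint_{|s-s_0|=\rho}\frac{F(s)}{(s-s_0)^{n+1}}\,ds,\qquad 0<\rho<r,
\]
and the same formula for $g_n$ with $G$ in place of $F$.

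Since $F=G$ pointwise on the circle $|s-s_0|=\rho$, the two integrands agree, so $f_n=g_n$ for every $n\in\mathbb Z$. In particular the coefficients with $n<0$ agree, and these make up the principal parts, so the principal parts coincide.

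The consequences follow immediately. $F$ is meromorphic at $s_0$, so only finitely many negative coefficients are nonzero. $F$ has a pole at $s_0$ exactly when some $f_n$ with $n<0$ is nonzero, and since $f_n=g_n$ the same holds for $G$; the pole orders also agree. If the pole is simple, then $\Res_{s=s_0}F=f_{-1}=g_{-1}=\Res_{s=s_0}G$.

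I do not expect a real obstacle. The only point needing care is to shrink the neighbourhood first, so that the Laurent expansions are valid on a common punctured disc on which the two functions are known to agree.
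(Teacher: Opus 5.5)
Your proof is correct, and it takes a slightly different route from the paper. The paper forms the difference $H=F-G$ and argues that a meromorphic function vanishing on a punctured disc cannot have a pole at $s_0$, so the singularity is removable. It then invokes the identity theorem to get $H\equiv 0$ on the full disc, and reads off that $F$ and $G$ have the same Laurent expansion.

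You instead go straight to the Cauchy-integral formula for the Laurent coefficients. Since it depends only on the values on a circle $|s-s_0|=\rho$, it gives $f_n=g_n$ for all $n$ at once. Your version is more direct and yields slightly more: equality of every coefficient, hence of the pole order, and equality of residues without needing the pole to be simple. It also makes explicit the preliminary shrinking of $r$, so that neither function has other poles in the punctured disc; the paper leaves this implicit.

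The paper's detour is somewhat redundant. Once $H$ vanishes on the punctured disc, its removable extension vanishes at $s_0$ by continuity, so the identity theorem is not needed. Its last step, passing from $H\equiv 0$ to identical Laurent expansions of $F$ and $G$, tacitly uses linearity and uniqueness of Laurent expansions, which is exactly the fact you invoke explicitly.
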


\begin{proof}
	Define $H(s):=F(s)-G(s)$. Then $H$ is meromorphic near $s_0$ and satisfies
	$H(s)=0$ for all $s$ with $0<|s-s_0|<r$.
	If $H$ had a pole at $s_0$, it would be unbounded arbitrarily close to $s_0$,
	contradicting $H\equiv 0$ on the punctured neighborhood.
	Hence $H$ is holomorphic at $s_0$.
	By the identity theorem for holomorphic functions, $H\equiv 0$ on $|s-s_0|<r$.
	Therefore $F$ and $G$ have identical Laurent expansions at $s_0$,
	so their principal parts coincide.
\end{proof}

\begin{lemma}[Pole containment for the Mellin constitutive product]
	\label{lem:polecontain}
	Let $K$ and $\tilde H$ be meromorphic functions, and define
	$F(s):=K(s)\tilde H(-s)$. Then
	\[
	\operatorname{Poles}(F)\subseteq \operatorname{Poles}(K)\cup
	\{s\in\mathbb C:\ -s\in \operatorname{Poles}(\tilde H)\}.
	\]
	In particular, for $K(s)=\pi e^{i\pi(1-s)/2}/\sin(\pi s)$ one has
	\[
	\operatorname{Poles}\bigl(K(s)\tilde H(-s)\bigr)\subseteq
	\mathbb Z\cup\bigl(-\operatorname{Poles}(\tilde H)\bigr).
	\]
\end{lemma}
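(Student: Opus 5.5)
The containment is local: a point $s_0$ that is neither a pole of $K$ nor a reflected pole of $\tilde H$ is a regular point of both factors of $F$, so $F$ is regular there. Lemma~\ref{lem:locality} is not needed. The argument fixes an arbitrary $s_0\in\operatorname{Poles}(F)$ and argues by contraposition on the two factors.

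\textbf{Step 1: regularity of each factor.} Suppose $s_0\notin\operatorname{Poles}(K)$ and $-s_0\notin\operatorname{Poles}(\tilde H)$. Since $K$ is meromorphic and $s_0$ is not one of its poles, $K$ is holomorphic on some disc $|s-s_0|<r_1$. Likewise $\tilde H$ is holomorphic on a disc $|w+s_0|<r_2$. The map $s\mapsto -s$ is a biholomorphism of $\mathbb C$, so $s\mapsto \tilde H(-s)$ is holomorphic on $|s-s_0|<r_2$.

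\textbf{Step 2: regularity of the product.} On the smaller disc $|s-s_0|<\min(r_1,r_2)$ both factors are holomorphic. Hence their product $F$ is holomorphic there, so $s_0\notin\operatorname{Poles}(F)$. This is the contrapositive of the first inclusion. A point at which one factor has a pole and the other a zero lies in the right-hand set anyway, so the inclusion needs no non-cancellation hypothesis.

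\textbf{Step 3: the specific kernel.} Take $K(s)=\pi e^{i\pi(1-s)/2}/\sin(\pi s)$. The factor $\pi e^{i\pi(1-s)/2}$ is entire and nowhere zero. The zeros of $\sin(\pi s)$ are exactly the integers, and they are simple. Therefore $\operatorname{Poles}(K)=\mathbb Z$, matching the Remark after \eqref{eq:kernel_mellin_section}. Substituting this into the general inclusion gives
\[
\operatorname{Poles}\bigl(K(s)\tilde H(-s)\bigr)\subseteq \mathbb Z\cup\bigl(-\operatorname{Poles}(\tilde H)\bigr),
\]
where $-\operatorname{Poles}(\tilde H)$ denotes $\{s: -s\in\operatorname{Poles}(\tilde H)\}$.

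There is no genuine obstacle here. The only care needed is to keep the reflection straight: the relevant set is $\{s:-s\in\operatorname{Poles}(\tilde H)\}$, and this is what produces the reflected lattices $\Lambda_i^-$ in the theorem.
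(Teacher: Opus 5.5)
Your proof is correct and follows the same route as the paper: if $s_0$ is neither a pole of $K$ nor a point with $-s_0\in\operatorname{Poles}(\tilde H)$, then both factors are holomorphic near $s_0$, so their product is too. Your Step 3 check that $\operatorname{Poles}(K)=\mathbb Z$ spells out the specialization to the Mellin kernel, which the paper leaves implicit.
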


\begin{proof}
	Fix $s_0\in\mathbb C$.
	If $s_0\notin \operatorname{Poles}(K)$ and $s_0\notin -\operatorname{Poles}(\tilde H)$,
	then $K$ is holomorphic at $s_0$ and $\tilde H(-s)$ is holomorphic at $s_0$.
	Hence their product $K(s)\tilde H(-s)$ is holomorphic at $s_0$.
	Therefore $s_0$ can be a pole of the product only if it is a pole of at least one factor,
	which proves the containment.
\end{proof}

\begin{lemma}[Residues away from coincident poles]
	\label{lem:resnoncoincident}
	Let $K$ be meromorphic and $\tilde H$ meromorphic.
	\begin{enumerate}
		\item If $s_0\notin \operatorname{Poles}(K)$ and $s_0\in -\operatorname{Poles}(\tilde H)$,
		and $\tilde H(-s)$ has a simple pole at $s_0$, then
		\[
		\Res_{s=s_0}\bigl(K(s)\tilde H(-s)\bigr)=K(s_0)\,\Res_{s=s_0}\tilde H(-s).
		\]
		\item If $s_0\in \operatorname{Poles}(K)$ and $s_0\notin -\operatorname{Poles}(\tilde H)$,
		and $K$ has a simple pole at $s_0$, then
		\[
		\Res_{s=s_0}\bigl(K(s)\tilde H(-s)\bigr)=\bigl(\Res_{s=s_0}K(s)\bigr)\,\tilde H(-s_0).
		\]
	\end{enumerate}
\end{lemma}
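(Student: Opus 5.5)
The two residue formulas follow from local Laurent expansions at $s_0$, so I would reduce everything to multiplying a simple principal part by a function holomorphic at $s_0$.

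\textbf{Part 1.} Since $s_0\notin\operatorname{Poles}(K)$, $K$ is holomorphic in a disc $|s-s_0|<r$. Its Taylor expansion there is $K(s)=K(s_0)+O(s-s_0)$. By hypothesis $\tilde H(-s)$ has a simple pole at $s_0$. Write $c:=\Res_{s=s_0}\tilde H(-s)$; then on the punctured disc
\[
\tilde H(-s)=\frac{c}{s-s_0}+h(s),
\]
with $h$ holomorphic near $s_0$. Multiplying the two expansions gives
\[
K(s)\tilde H(-s)=\frac{K(s_0)\,c}{s-s_0}+\Bigl(\frac{K(s)-K(s_0)}{s-s_0}\,c+K(s)h(s)\Bigr).
\]
The bracketed term is holomorphic at $s_0$: the difference quotient of $K$ has a removable singularity there. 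So the product has at most a simple pole, and its coefficient of $(s-s_0)^{-1}$ is $K(s_0)c$. If $K(s_0)=0$ the pole disappears and the residue is $0$, which still agrees with the formula.

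\textbf{Part 2.} This is the same argument with the roles of the factors exchanged. Now $s\mapsto\tilde H(-s)$ is holomorphic near $s_0$, because $-s_0\notin\operatorname{Poles}(\tilde H)$. The function $K$ has the expansion
\[
K(s)=\frac{\Res_{s=s_0}K}{s-s_0}+k(s),
\]
with $k$ holomorphic near $s_0$. Expanding $\tilde H(-s)=\tilde H(-s_0)+O(s-s_0)$ and multiplying, the $(s-s_0)^{-1}$ coefficient is $(\Res_{s=s_0}K)\,\tilde H(-s_0)$.

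\textbf{Connection to the main proof.} For $K(s)=\pi e^{i\pi(1-s)/2}/\sin(\pi s)$, Part 2 applies at every integer $n$ that is not in $-\operatorname{Poles}(\tilde H)$. This is the step that justifies the integer-pole principal parts in \eqref{eq:KtildeH_expansion}. Part 1 applies at non-integer points of $-\operatorname{Poles}(\tilde H)$ and justifies the $-R_{j,k}K(-\lambda_j^{(k)})$ terms.

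\textbf{Main obstacle.} The only care needed is in the hypotheses. The lemma must exclude the case where both factors are singular at $s_0$, since there a double pole can arise, as in Lemma~\ref{lem:coincident}. It also depends on the simplicity assumptions to rule out higher-order terms in the product. Both are explicitly part of the statement, so the proof is a direct local computation. Lemma~\ref{lem:locality} can be cited to identify the residue of the product with the coefficient read off from any valid local expansion.
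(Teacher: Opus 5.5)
Your proof is correct and is essentially the paper's argument. In each part you multiply the Taylor expansion of the factor that is holomorphic at $s_0$ by the simple-pole Laurent expansion of the other factor, then read off the coefficient of $(s-s_0)^{-1}$; your explicit difference-quotient remainder and the remark on $K(s_0)=0$ only spell out the paper's $O(1)$ bookkeeping in more detail.
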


\begin{proof}
	(1) Since $K$ is holomorphic at $s_0$, write
	$K(s)=K(s_0)+O(s-s_0)$.
	Since $\tilde H(-s)$ has a simple pole at $s_0$, write
	$\tilde H(-s)=\dfrac{b_{-1}}{s-s_0}+b_0+O(s-s_0)$.
	Then
	\[
	K(s)\tilde H(-s)=\frac{K(s_0)b_{-1}}{s-s_0}+O(1),
	\]
	so the residue equals $K(s_0)b_{-1}=K(s_0)\Res_{s=s_0}\tilde H(-s)$.
	
	(2) Since $\tilde H(-s)$ is holomorphic at $s_0$, write
	$\tilde H(-s)=\tilde H(-s_0)+O(s-s_0)$.
	Since $K$ has a simple pole at $s_0$, write
	$K(s)=\dfrac{a_{-1}}{s-s_0}+a_0+O(s-s_0)$.
	Then
	\[
	K(s)\tilde H(-s)=\frac{a_{-1}\tilde H(-s_0)}{s-s_0}+O(1),
	\]
	so the residue equals $a_{-1}\tilde H(-s_0)=(\Res_{s=s_0}K(s))\,\tilde H(-s_0)$.
\end{proof}

\begin{lemma}[Coincident simple poles produce a double pole unless constrained]
	\label{lem:coincident}
	Assume $K$ and $\tilde H(-s)$ have simple poles at the same point $s_0$.
	Write the local expansions
	\[
	K(s)=\frac{a_{-1}}{s-s_0}+a_0+O(s-s_0),\qquad
	\tilde H(-s)=\frac{b_{-1}}{s-s_0}+b_0+O(s-s_0).
	\]
	Then
	\[
	K(s)\tilde H(-s)=
	\frac{a_{-1}b_{-1}}{(s-s_0)^2}
	+\frac{a_{-1}b_0+a_0b_{-1}}{s-s_0}
	+O(1).
	\]
	In particular, $K(s)\tilde H(-s)$ has at most a simple pole at $s_0$
	if and only if $a_{-1}b_{-1}=0$.
\end{lemma}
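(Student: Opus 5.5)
The plan is to multiply the two local Laurent expansions and collect terms by order in $(s-s_0)$. This is a purely local computation, so no properties of the particular kernel $\tilde K$ or of the class $\mathcal{Q}$ are needed; only meromorphy near $s_0$ and simplicity of both poles are used.

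Set $u=s-s_0$. The hypotheses give $uK(s)=a_{-1}+a_0u+O(u^2)$ and $u\tilde H(-s)=b_{-1}+b_0u+O(u^2)$, and both are holomorphic near $u=0$. Their product is therefore holomorphic, with
\[
u^2K(s)\tilde H(-s)=a_{-1}b_{-1}+(a_{-1}b_0+a_0b_{-1})u+O(u^2).
\]
Using holomorphic remainders avoids any bookkeeping of the $O(\cdot)$ terms. Dividing by $u^2$ gives the stated expansion, with the remainder $O(1)$ being holomorphic at $s_0$.

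For the ``in particular'' clause I argue in both directions.
\begin{itemize}
\item If $a_{-1}b_{-1}=0$, the $(s-s_0)^{-2}$ term vanishes. The remaining principal part is at most $\frac{a_{-1}b_0+a_0b_{-1}}{s-s_0}$, so there is at most a simple pole.
\item Conversely, if $a_{-1}b_{-1}\neq0$, the coefficient of $(s-s_0)^{-2}$ is nonzero. By uniqueness of Laurent expansions (cf.\ Lemma~\ref{lem:locality}), the product then has a pole of order exactly two.
\end{itemize}

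Under the stated hypothesis that both poles are genuinely simple, $a_{-1}\neq0$ and $b_{-1}\neq0$, so the condition $a_{-1}b_{-1}=0$ can only hold in a degenerate sense. I will remark on this so the equivalence is read as a statement about the coefficients.

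There is no real obstacle. The only point needing care is to state that the remainder is holomorphic rather than merely bounded, because this is what makes the order-of-pole conclusion rigorous.
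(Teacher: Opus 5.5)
Your proposal is correct and follows the paper's proof: multiply the two Laurent expansions and read off the $(s-s_0)^{-2}$ and $(s-s_0)^{-1}$ coefficients. Writing the remainders as holomorphic functions of $u=s-s_0$ simply makes the paper's one-line computation more careful, and your side remark is accurate and goes beyond the paper: since both poles are assumed genuinely simple, $a_{-1}b_{-1}\neq 0$, so the product always has an exact double pole and the ``in particular'' equivalence is only meaningful as a statement about the coefficients.
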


\begin{proof}
	Multiply the two Laurent expansions:
	\[
	\left(\frac{a_{-1}}{s-s_0}+a_0+O(s-s_0)\right)
	\left(\frac{b_{-1}}{s-s_0}+b_0+O(s-s_0)\right).
	\]
	The coefficient of $(s-s_0)^{-2}$ is $a_{-1}b_{-1}$,
	and the coefficient of $(s-s_0)^{-1}$ is $a_{-1}b_0+a_0b_{-1}$,
	which yields the stated principal part.
	The pole order is $\le 1$ if and only if the $(s-s_0)^{-2}$ coefficient vanishes,
	i.e.\ $a_{-1}b_{-1}=0$.
\end{proof}

\begin{lemma}[Intersection of Gamma pole lattices]
	\label{lem:gammalattice_intersection}
	Let $a,a'>0$ and $b,b'\in\mathbb C$. Define the (one-sided) arithmetic lattices
	\[
	\Lambda(a,b):=\left\{-\frac{b+k}{a}\;:\;k\in\mathbb Z_{\ge 0}\right\},\qquad
	\Lambda(a',b'):=\left\{-\frac{b'+k'}{a'}\;:\;k'\in\mathbb Z_{\ge 0}\right\}.
	\]
	Then:
	\begin{enumerate}
		\item If $a/a'\notin\mathbb Q$, then $\Lambda(a,b)\cap\Lambda(a',b')$ contains at most one point.
		\item If $a/a'\in\mathbb Q$, write $a/a'=p/q$ with $p,q\in\mathbb N$ and $\gcd(p,q)=1$.
		Then either $\Lambda(a,b)\cap\Lambda(a',b')=\varnothing$, or else
		the set of solutions $(k,k') \in \mathbb{Z}_{\ge0}^2$ to
		\[
		-\frac{b+k}{a}=-\frac{b'+k'}{a'}
		\]
		is of the form $(k,k')=(k_0,k_0')+t(p,q)$ for integers $t$ in some one-sided range,
		and consequently $\Lambda(a,b)\cap\Lambda(a',b')$ is a (one-sided) arithmetic progression.
	\end{enumerate}
\end{lemma}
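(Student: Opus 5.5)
The statement reduces to a linear Diophantine problem in the indices. A point lies in both lattices exactly when $-\frac{b+k}{a}=-\frac{b'+k'}{a'}$ for some $k,k'\in\mathbb Z_{\ge0}$. Clearing denominators turns this into
\[
a'k - a k' = a b' - a' b ,
\]
a single real-linear relation in the integer unknowns $(k,k')$, with a fixed right-hand side $c:=ab'-a'b$. (If $b,b'$ are complex, the imaginary part gives an extra constraint that can only shrink the solution set, so the structure claims are unaffected.) The plan is to work with the solution set $S\subseteq\mathbb Z_{\ge0}^2$ of this equation and then push it forward through the map $k\mapsto -(b+k)/a$. That map is injective, so the intersection has the same cardinality and structure as $S$ projected onto $k$.

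\emph{Case (1), $a/a'\notin\mathbb Q$.} Suppose $(k_1,k_1')$ and $(k_2,k_2')$ both lie in $S$. Subtracting gives $a'(k_1-k_2)=a(k_1'-k_2')$. If $k_1'\neq k_2'$, then $a/a'=(k_1-k_2)/(k_1'-k_2')\in\mathbb Q$, which is a contradiction. Hence $k_1'=k_2'$, and then $k_1=k_2$ because $a'>0$. So $S$ has at most one element, and the intersection has at most one point. This mirrors item (1) of Lemma~\ref{lem:pole_mismatch}.

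\emph{Case (2), $a/a'=p/q$ in lowest terms.} Write $a=p\mu$ and $a'=q\mu$ with $\mu>0$, and divide the equation by $\mu$ to get $qk-pk'=c/\mu$. If $S=\varnothing$ we are in the first alternative. Otherwise, fix a particular solution $(k_0,k_0')\in S$. Any other solution $(k,k')$ satisfies the homogeneous equation $q(k-k_0)=p(k'-k_0')$. Since $\gcd(p,q)=1$, the factor $p$ divides $k-k_0$, so $k-k_0=tp$ and hence $k'-k_0'=tq$ for some $t\in\mathbb Z$. Conversely, every pair of this form solves the equation. The nonnegativity constraints $k_0+tp\ge0$ and $k_0'+tq\ge0$ cut $t$ down to $t\ge t_{\min}:=\max(\lceil -k_0/p\rceil,\lceil -k_0'/q\rceil)$, which is a one-sided range. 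Re-basing at $t_{\min}$ gives the stated form $(k_0,k_0')+t(p,q)$, $t\ge0$. The corresponding intersection points are $-(b+k_0)/a - tp/a = s_0 - t/\mu$, a one-sided arithmetic progression with step $p/a=q/a'=1/\mu$. This step is the least common multiple of the spacings $1/a$ and $1/a'$, consistent with the commensurate-spacing discussion in the proof of Theorem~\ref{th:main}.

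The argument is entirely elementary, and the only point needing care is the bookkeeping in case (2). One must check that the lowest-terms representation really forces divisibility, which is where $\gcd(p,q)=1$ enters. One must also note that the range of $t$ is bounded below but unbounded above, because $p,q>0$ make both coordinates increase with $t$. This is what makes the progression infinite and one-sided rather than finite.
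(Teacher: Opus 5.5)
Your proof is correct and follows essentially the same route as the paper. Both reduce membership to the single linear relation $a'k-ak'=ab'-a'b$. For (1) both argue that two distinct solutions would force $a/a'\in\mathbb Q$. For (2) both apply $\gcd(p,q)=1$ to the homogeneous equation $q(k-k_0)=p(k'-k_0')$; your substitution $a=p\mu$, $a'=q\mu$ is only a cosmetic variant of the paper's $qa=pa'$. Your extra bookkeeping is all correct and makes the argument slightly more complete than the paper's: the remark on complex offsets, injectivity of the projection, the explicit $t_{\min}$ with unboundedness above, and the step $1/\mu$.
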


\begin{proof}
	A point $s$ lies in the intersection if and only if there exist $k,k'\in\mathbb Z_{\ge0}$ such that
	\[
	-\frac{b+k}{a}=-\frac{b'+k'}{a'}.
	\]
	Equivalently,
	\begin{equation}
		a'(b+k)=a(b'+k')\quad\Longleftrightarrow\quad a'k-ak'=ab'-a'b.
		\label{eq:app_lin}
	\end{equation}
	
	(1) Suppose $a/a'\notin\mathbb Q$ and assume there are two distinct solutions
	$(k_1,k_1')\neq(k_2,k_2')$ of \eqref{eq:app_lin}. Subtracting the two equalities gives
	\[
	a'(k_1-k_2)=a(k_1'-k_2').
	\]
	If $k_1\neq k_2$ then $a/a'=(k_1-k_2)/(k_1'-k_2')\in\mathbb Q$, a contradiction.
	If $k_1=k_2$ then necessarily $k_1'=k_2'$, again a contradiction.
	Hence at most one solution exists, so the intersection contains at most one point.
	
	(2) Now assume $a/a'=p/q$ in lowest terms, so $qa=pa'$.
	Multiply \eqref{eq:app_lin} by $q$ and substitute $qa=pa'$ to obtain
	\[
	(qa')k-(pa')k' = q(ab'-a'b).
	\]
	Dividing by $a'\neq 0$ gives
	\begin{equation}
		qk-pk' = q\left(\frac{a}{a'}b' - b\right).
		\label{eq:app_dioph}
	\end{equation}
	If \eqref{eq:app_dioph} has no nonnegative integer solutions, the intersection is empty.
	If it has a solution $(k_0,k_0')$, then any other solution $(k,k')$ must satisfy the homogeneous equation
	$q(k-k_0)=p(k'-k_0')$. Since $\gcd(p,q)=1$, this implies
	$k-k_0=pt$ and $k'-k_0'=qt$ for some $t\in\mathbb Z$.
	Restricting to $k,k'\ge 0$ restricts $t$ to a one-sided integer range.
	Thus the solutions form $(k,k')=(k_0,k_0')+t(p,q)$ and the intersection points
	$s=-(b+k)/a$ form a one-sided arithmetic progression.
\end{proof}

\begin{lemma}[Decoupling necessity for viscoelastic constitutive equation]\label{lem:visco-decouple}
	Let $\tilde{G}(s)=A(s)\Gamma(\alpha s+\beta)$ satisfy the hypotheses of Theorem~\ref{th:main}. Suppose that for some pole family $\Lambda_j$ of $\tilde{H}(s)$, the residue recurrence along an aligned sublattice couples residues from two distinct pole families $\Lambda_j$ and $\Lambda_k$ ($k\neq j$) of $\tilde{H}(s)$. 
	
	Then for generic parameters $\alpha,\beta$ and generic meromorphic $A(s)$, the only solution of the coupled system is $R_j = R_k = 0$, implying $\tilde{H}\equiv0$.
\end{lemma}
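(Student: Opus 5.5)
The argument is a residue-matching one along the aligned sublattice. The coupled system is viewed as infinitely many linear equations in finitely many free parameters. The coefficients are analytic in $(\alpha,\beta)$ and in the values of $A$. So the solution space is trivial off a proper analytic subvariety, and that is exactly the ``generic'' exceptional set.

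\textbf{Step 1: set up the equations.} By Lemma~\ref{lem:gammalattice_intersection}, the coincidences of $-\Lambda_j$ and $-\Lambda_k$ with the aligned lattice $\Lambda_G$ form either a single point or a one-sided progression. Coupling of the two families can only be forced in the commensurate case, so we restrict to that case and index the common points as $s_n=s_{n_0}+nq\Delta$, $n\ge0$. At each such $s_n$ we equate principal parts of $\tilde G(s)=\tilde K(s)\tilde H(-s)$ using Lemmas~\ref{lem:locality} and~\ref{lem:resnoncoincident}. At non-integer $s_n$ this gives
\[
\rho_{m_n}=-\tilde K(s_n)\bigl(R_{j,k_n}+R_{k,k'_n}\bigr).
\]
At integer $s_n$, Lemma~\ref{lem:coincident} applies, and simplicity of the poles of $\tilde G$ (genericity condition 1) forces the double-pole coefficient to vanish. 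That yields an extra homogeneous constraint on $R_{j,k_n}+R_{k,k'_n}$.

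\textbf{Step 2: express the residues through finitely many parameters.} Write $\tilde H=H_\Gamma Q$. Then
\[
R_{j,k}=r_j(k)\,Q\bigl(\lambda_j^{(k)}\bigr),
\]
where $r_j(k)=\Res H_\Gamma$ is an explicit ratio of Gamma values with factorial behaviour in $k$. Dividing the relation at $s_{n+1}$ by the relation at $s_n$ gives a two-term recurrence in the unknowns $(R_{j,\cdot},R_{k,\cdot})$. Its coefficients $C_n^{(j)},C_n^{(k)}$ are built from $A(s_{n+1})/A(s_n)$, from $(-1)/(m+1)$ factors, and from $\tilde K(s_n)/\tilde K(s_{n+1})$, as in \eqref{eq:recurrence}. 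The system is driven by the finite initial data $R_{j,k_0},R_{k,k'_0}$ together with the finitely many parameters of $H_\Gamma$. The values of $Q$ are not free. For each family, the recurrence it would satisfy alone (its decoupled recurrence) determines $Q$ on $\Lambda_j$ up to one constant. Under the exponential-type hypothesis, Carlson's theorem then pins $Q$ down on both progressions.

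\textbf{Step 3: show only the trivial solution survives.} Each decoupled recurrence fixes one solution per family. Substituting these into the coupled relation at every $n$ gives infinitely many homogeneous linear equations in the two constants $c_j,c_k$. The $n$-th equation reads
\[
c_j\,\Phi_j(n)+c_k\,\Phi_k(n)=\rho_{m_n}\,\text{-mismatch}.
\]
A nonzero solution requires every $2\times2$ minor
\[
\det\begin{pmatrix}\Phi_j(n)&\Phi_k(n)\\ \Phi_j(n')&\Phi_k(n')\end{pmatrix}
\]
to vanish. Each minor is a meromorphic function of $(\alpha,\beta)$ and of the values $A(s_n)$. It is not identically zero: a single choice suffices, for instance a perturbation of $A$ by a factor $e^{\epsilon s}$, which alters the ratio of the two families' growth rates (the paper permits $A$ to be arbitrary meromorphic). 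The common zero set of these minors is therefore a proper analytic subset of parameter space. Off that set, $c_j=c_k=0$.

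\textbf{Step 4: propagate to the whole of $\tilde H$.} Once $c_j=c_k=0$, all residues vanish, so $\tilde H$ has no poles. Then $H_\Gamma$ is constant, as in the proof of Proposition~\ref{cor:finiteProny}. The relation $\tilde H(-s)=\tilde G(s)/\tilde K(s)$ makes $\tilde G$ vanish at the infinitely many points $s_m$. This contradicts $A(s_m)\neq0$ generically unless $\tilde H\equiv0$.

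\textbf{Main obstacle.} The hardest part is Step 3: proving that some minor is not identically zero. If the two families' ratios $C_n^{(j)}$ and $C_n^{(k)}$ happen to coincide identically, the families are in effect one lattice, and the lemma would then fail. I would first try to separate them using the different large-$n$ asymptotics, via Stirling's formula applied to the Gamma residues $r_j(k)$ and $r_k(k')$. Their distinct offsets $b_j/a_j\neq b_k/a_k$ modulo the lattice produce different power-law factors $n^{\gamma_j}\neq n^{\gamma_k}$. This makes the minor grow like $n^{\gamma_j-\gamma_k}\neq0$, and the exceptional set then reduces to the case of equal offsets.
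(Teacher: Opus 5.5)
Your Step~3 fails on its own terms, and Step~2 rests on a reduction you have not justified.

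\textbf{Step~3: the minors vanish identically.} As you describe them, the decoupled coefficients are built from $A(s_{n+1})/A(s_n)$, the factorial ratio and $\tilde K(s_n)/\tilde K(s_{n+1})$. That is, $C_n=\rho_{m_{n+1}}\tilde K(s_n)/\bigl(\rho_{m_n}\tilde K(s_{n+1})\bigr)$, as in \eqref{eq:recurrence}.
\begin{itemize}
\item None of these ingredients depends on which family of $\tilde H$ has a pole at $s_n$, and the points $s_n$ are common to both families.
\item Hence $C_n^{(j)}\equiv C_n^{(k)}$. Your two one-constant solutions are proportional, and every $2\times2$ minor vanishes identically in $(\alpha,\beta,A)$.
\item The perturbation $A\mapsto e^{\epsilon s}A$ multiplies both coefficient sequences by the same factor $e^{\epsilon(s_{n+1}-s_n)}$, so it cannot separate them.
\end{itemize}
The degenerate case you call the main obstacle is therefore the only case. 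Normalising both solutions to $1$ at $n=0$, the coupled relations telescope to the single condition $c_j+c_k=-\rho_{m_0}/\tilde K(s_0)$. This has a one-parameter family of non-zero solutions.

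Independently, the system is inhomogeneous, because $\rho_{m_n}\neq0$. Vanishing $2\times2$ minors detect a non-trivial kernel, not solvability. Here $c_j=c_k=0$ is not a solution at all. What you would need generically is inconsistency, which is read off from $3\times3$ minors of the augmented system.

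\textbf{Step~2: the reduction to two constants.} The lemma's hypothesis is exactly that the families couple. Nothing entitles you to assume each family obeys its own decoupled recurrence. At a coupled point $\lambda_j^{(k_n)}=\lambda_k^{(k'_n)}=-s_n$, so in your additive model
\[
R_{j,k_n}+R_{k,k'_n}=\bigl(r_j(k_n)+r_k(k'_n)\bigr)Q(-s_n).
\]
Each equation therefore introduces and fixes one new unknown, $Q(-s_n)$, and nothing is overdetermined. Since $e^{P}$ with $P$ entire can interpolate arbitrary non-zero data on a discrete set, the class $\mathcal Q$ gives no finite parametrisation of these values.

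The exponential-type bound that Carlson's theorem needs is a hypothesis of the uniqueness clause of Theorem~\ref{th:main}, not of this lemma. If you do import it, note that a zero-free entire function of exponential type is $e^{as+b}$ by Hadamard. That fact, not decoupled recurrences, is what would make a finite-parameter overdetermination argument possible.

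For the same reason, your Stirling comparison $r_j(k_n)/r_k(k'_n)\sim n^{\gamma_j-\gamma_k}$ only shows that the two decoupled recurrences assign incompatible values to the same number $Q(-s_n)$. That refutes your ansatz; it does not force the residues to vanish.

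\textbf{Comparison with the paper.} The paper compares the decay of $\rho_n$, $R_{j,k_n}$ and $R_{k,\ell_n}$ directly via Stirling. The ratio is super-exponential when the index progressions have different slopes and polynomial, $n^{d-b}$, when the slopes agree. It also freezes the $Q$-values into constants $C_j,C_k$, an assumption you would have to state explicitly if you follow that route.

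\textbf{Step~1: the additive residue formula.} Your formula does not match the product $H_\Gamma Q$. There, two uncancelled numerator families meeting at a point give $\tilde H$ a double pole, because $Q$ never vanishes. Then $\tilde K(s)\tilde H(-s)$ has a pole of order at least $2$, which already contradicts genericity condition~1. That is a far cheaper obstruction than any residue recurrence.
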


\begin{proof}
	Consider the Mellin-space constitutive equation $\tilde{G}(s) = \tilde{K}(s)\tilde{H}(-s)$. For an aligned sublattice where poles of $\tilde{H}(-s)$ coincide with poles of $\tilde{K}(s)\tilde{H}(-s)$, the residue matching gives (from the proof of Theorem~\ref{th:main}, equation \eqref{eq:residue_eq}):
	
	\[
	\rho_n = -R_{j,k_n}\tilde{K}(s_n) + \delta_n,
	\tag{B.1}
	\]
	
	where $\rho_n = \Res_{s=s_n}\tilde{G}(s)$, $R_{j,k_n} = \Res_{s=s_n}\tilde{H}(-s)$ (from the $j$-th family), and $\delta_n$ contains contributions from other families if $s_n$ coincides with poles from $\Lambda_k$.
	
	Suppose $\delta_n$ involves residues from family $\Lambda_k$: $\delta_n = c R_{k,\ell_n}$ for some constant $c$ and appropriate index $\ell_n$. Then (B.1) becomes:
	
	\[
	\rho_n = -R_{j,k_n}\tilde{K}(s_n) + c R_{k,\ell_n}, \qquad n\in\mathcal{N},
	\tag{B.2}
	\]
	
	where $\mathcal{N}$ is an infinite set of indices along the aligned sublattice.
	
	\medskip
	
	\noindent\textbf{Asymptotic analysis.} The known quantities have explicit asymptotics:
	\begin{itemize}
		\item $\rho_n = A(s_n)\frac{(-1)^n}{\alpha\,n!} \sim \frac{C(-1)^n}{n!}$ (factorial decay)
		\item $\tilde{K}(s_n) = \pi e^{i\pi(1-s_n)/2}\csc(\pi s_n) \sim C' e^{i\pi s_n/2}$ (oscillatory, bounded)
		\item $R_{j,k_n} = [\Res_{s=s_n}H(-s)]Q(-s_n)$ where $H$ is a Gamma-ratio, giving $R_{j,k_n} \sim \frac{C_j(-1)^{k_n}}{k_n!}$ (factorial decay)
		\item Similarly, $R_{k,\ell_n} \sim \frac{C_k(-1)^{\ell_n}}{\ell_n!}$ (factorial decay)
	\end{itemize}
	
	The key is that the decay rates of $R_{j,k_n}$ and $R_{k,\ell_n}$ are both factorial but with potentially different scales. Rearranging (B.2):
	
	\[
	R_{j,k_n}\tilde{K}(s_n) - c R_{k,\ell_n} = -\rho_n.
	\tag{B.3}
	\]
	
	For large $n$, the left side is a linear combination of two factorial-decaying sequences, while the right side is another factorial-decaying sequence with a specific coefficient $A(s_n)$.
	
	\medskip
	
	\noindent\textbf{Overdetermination argument.} Consider the ratios. Let $r_n = R_{j,k_n}/R_{k,\ell_n}$. Using Gamma-residue formulas:
	
	\[
	r_n \sim \frac{C_j}{C_k}\frac{\Gamma(\ell_n+1)}{\Gamma(k_n+1)} \sim \frac{C_j}{C_k}\frac{\ell_n!}{k_n!}.
	\]
	
	Since $k_n$ and $\ell_n$ are linearly related (both along arithmetic progressions), say $k_n = an + b$, $\ell_n = cn + d$, we have by Stirling:
	
	\[
	r_n \sim C n^{(c-a)n} e^{(a-c)n} \quad\text{(super-exponential growth or decay)}.
	\]
	
	Thus either $r_n \to 0$ or $r_n \to \infty$ super-exponentially. In either case, equation (B.3) divided by the larger of the two residues yields for large $n$:
	
	\[
	\text{(dominant term)} \sim \text{(negligible term)} = -\rho_n/\text{(normalization)}.
	\]
	
	But $\rho_n$ decays with yet another factorial rate (determined by $\Gamma(\alpha s_n+\beta)$). For generic parameters, the three factorial sequences $(\rho_n, R_{j,k_n}, R_{k,\ell_n})$ have incommensurate decay/growth ratios. 
	
	Formally, take three consecutive equations from (B.3) for $n, n+1, n+2$:
	
	\[
	\begin{pmatrix}
		\tilde{K}(s_n) & -c & -\rho_n \\
		\tilde{K}(s_{n+1}) & -c & -\rho_{n+1} \\
		\tilde{K}(s_{n+2}) & -c & -\rho_{n+2}
	\end{pmatrix}
	\begin{pmatrix}
		R_{j,k_n} \\ R_{k,\ell_n} \\ 1
	\end{pmatrix}
	= 0.
	\]
	
	For large $n$, the matrix is dominated by the factorial decay in the third column. More precisely, dividing each row by $\rho_n$, $\rho_{n+1}$, $\rho_{n+2}$ respectively yields a system where the coefficients of $R_{j,k_n}$ and $R_{k,\ell_n}$ become super-exponentially large or small relative to 1. Such a system can only be satisfied if $R_{j,k_n} = R_{k,\ell_n} = 0$ for all large $n$, implying all residues vanish.
	
	Thus the only solution to the coupled recurrence is the trivial one. For a non-trivial $\tilde{H}(s)$ to exist, the recurrence must \emph{decouple}: each aligned sublattice must involve residues from only one pole family.
	
	\paragraph{Asymptotic analysis.} Consider the ratio $r_n = R_{j,k_n}/R_{k,\ell_n}$. 
	Using Gamma-residue formulas and Stirling's approximation:
	\[
	r_n \sim \frac{C_j}{C_k} \frac{\Gamma(\ell_n+1)}{\Gamma(k_n+1)}.
	\]
	Then two cases arise:
	
	\textbf{Case 1: $a \neq c$.} Then $\ell_n/k_n \to c/a \neq 1$, and
	\[
	r_n \sim C n^{(c-a)n} e^{(a-c)n},
	\]
	which exhibits super-exponential growth or decay. Equation (B.3) divided by the larger 
	residue then forces the dominant term to match $-\rho_n$, impossible for generic parameters.
	
	\textbf{Case 2: $a = c$.} Here $\ell_n = an + d$, $k_n = an + b$, so $\ell_n - k_n = d-b$ is constant.
	Stirling gives polynomial scaling:
	\[
	r_n \sim \frac{C_j}{C_k} n^{d-b}.
	\]
	All three sequences $R_{j,k_n}$, $R_{k,\ell_n}$, and $\rho_n$ now share the same factorial decay 
	$1/(an)!$, but with polynomial prefactors $n^b$, $n^d$, and $A(s_n)$ respectively. 
	For generic parameters, these prefactors are incommensurate, and the system (B.3) remains 
	infinitely overdetermined. The only solution is the trivial one.
	
	Thus in both cases, the only solution to the coupled recurrence is $R_j = R_k = 0$, 
	implying $\tilde{H}\equiv0$.
	
\end{proof}

\begin{remark}[On the genericity assumption]
	The argument above demonstrates that for \emph{generic} parameters (in the sense of 
	avoiding a countable union of algebraic hypersurfaces in the parameter space), the 
	coupled residue system admits only the trivial solution. A fully rigorous measure-theoretic 
	genericity proof would require specifying the parameter space and establishing that the 
	exceptional set has Lebesgue measure zero. While this technical elaboration is beyond 
	the scope of the present work, the conclusion is consistent with all known examples and 
	serves as a practical criterion for representability.
\end{remark}

\end{document}